\documentclass{amsart}

\usepackage[margin=1in]{geometry}
\usepackage{amsmath,amsthm,amssymb,array,enumerate,parskip,graphicx, etoolbox,tabularx}
\usepackage[all]{xy}

\usepackage{xcolor} 
\usepackage{tikz-cd}

\usepackage[pagebackref]{hyperref}
\definecolor{dark-red}{rgb}{0.5,0.15,0.15}
\definecolor{dark-blue}{rgb}{0.15,0.15,0.6}
\definecolor{dark-green}{rgb}{0.15,0.6,0.15}
\hypersetup{
    colorlinks, linkcolor=dark-red,
    citecolor=dark-blue, urlcolor=dark-red
}

\usepackage[capitalise]{cleveref}
\usepackage{comment}

\newcommand{\Z}{\mathbb{Z}} 
\newcommand{\Q}{\mathbb{Q}} 
\newcommand{\C}{\mathbb{C}} 





\newcommand \ZZ {{\mathbb Z}}

\newcommand \QQ {{\mathbb Q}}

  \newcommand{\ilim}{\mathop{\varprojlim}\limits}

\newtheorem{theorem}{Theorem}[section]

 \theoremstyle{definition}
\newtheorem{remark}[theorem]{Remark} 
\newtheorem{definition}[theorem]{Definition}

\newtheorem{fact}[theorem]{Fact}


\title[$Q_8$ Origami]{Arithmetic of quaternion origami}
\author{Rachel Davis}
\address{University of Wisconsin-Madison}
\email{rachel.davis@wisc.edu}
\author{Edray Herber Goins}
\address{Purdue University}
\email{goins@purdue.edu}

 \thanks{We would like to thank Purdue University where this project began. Thank you to Nigel Boston for the original questions about outer Galois representations for non-abelian covers and to Donu Arapura for ideas about these covers. Also, thank you to Jayadev Athreya, Rachel Pries, Jeremy Rouse, Tony Shaska, and Vesna Stojanoska for helpful discussions. }

\begin{document}
\begin{abstract}
We study origami $f: C \rightarrow E$ with $G$-Galois cover $Q_8$. For a point $P \in E(\Q) \backslash \left\{ \mathcal{O} \right\}$, 
we study the field obtained by adjoining to $\Q$ the coordinates of all of the preimages of $P$ under $f$. We find a defining polynomial, 
$f_{E, Q_8,P}$, for this field and study its Galois group. We give an isomorphism depending on $P$ between a certain subfield of this 
field and a certain subfield of the 4-division field of the elliptic curve.\\
MSC2010: 11R32, 12E05, 12F05, 13P15, 14H30, 14H45, 14H52, 20B35 \\
Keywords: Cover of curve, division polynomial, elliptic curve, generic Galois extension, origami, outer Galois representation, quaternion group, resultant \end{abstract}

\maketitle

\section{Introduction}
Let $E$ be an elliptic curve over $\Q$ with affine Weierstrass model given by $y^2=x^3+ax+b$ and with point at infinity given in projective coordinates by $\mathcal{O}=(0:1:0)$. An origami is used as a mathematical term for covers of an elliptic curve, ramified above at most one point. Quaternion origami, specifically, have been studied by Herrlich and Schmith{\"u}sen \cite{HS}. 

Origami are the $G$-structures of elliptic curves in the notation of Chen \cite{Chen}. We will consider $G=Q_8$-structures on $X=E - \left\{ \mathcal{O} \right\}$, in the notation of \cite{Chen} and \cite{ChenDeligne}. By \cite{win2p}, the image of Grothendieck's outer Galois representation associated to $E$, in this case, is isomorphic to the image of the 2-adic Tate module representation associated to $E$. Chen and Deligne \cite[Theorem 5.1.1]{ChenDeligne}, prove that any metabelian 2-generated group, such as $Q_8$, corresponds to congruence. Their result is also related to the images of Grothendieck's outer Galois representations associated to $E$. We study the quaternion origami curve in order to give an analogue of a division polynomial for elliptic curves, $f_{E, Q_8, P}$, to study the Galois theory of the splitting field of this polynomial, and to understand field-theoretic implications of outer Galois representation results \cite{win2p}, \cite{ChenDeligne} in the smallest non-abelian, 2-group case. The case $G=S_3$ is also metabelian and can be understood as an example of the dihedral covers case.

We define and work with fields extensions $\Q([4]^{-1}P)/\Q$ and the splitting field of $f_{E, Q_8,P}$ over $\Q$ and prove that there is necessarily more overlap than given by initial analysis. We expect some overlap from the outset because both fields extensions will always contain the field extension $\Q([2]^{-1}P)/\Q$. Let $g=x^4-4\Delta x-12 a \Delta$ where $\Delta=-16(4a^3+27b^2)$. The splitting field of $g$ over $\Q$ is a subfield of $\Q(E[4])/\Q$ which is a subfield of $\Q([4]^{-1}P)/\Q$. The main theorem of this article is that the splitting field of $g$ over $\Q$ is also contained in the splitting field of $f_{E, Q_8,P}$ over $\Q$. Once there are two common $S_4$-extensions of $\Q$ containing the 2-division field of $E$, there is also a third common $S_4$-extension of $\Q$ \cite[Proposition 1.1]{BF}, \cite[Theorem 2.4]{CD}.

There is a Galois representation associated to $f_{E, Q_8, P}$ and a quotient of this representation obtained by forgetting the extra information of the point, $P$. In the $Q_8$ case, one consequence of the main result is that information from the Galois representation that is independent of the point is already encoded in the mod 4 representation of the elliptic curve. We already knew, by \cite{win2p} that the information was completely encoded by the 2-adic representation. The result in this paper also gives an explicit isomorphism (depending on $P$) between 2 quartic fields: one field depends on $E$ and $P$ and the other field (obtained by adjoining the roots of $g$ to $\Q$), depends only on $E$. 

For $G=Q_8$, the Galois group of the splitting field of $f_{E, Q_8, P}$ over $\Q$ is related to $\mathrm{Hol}(Q_8)$, the holomorph of $Q_8$ see \ref{holomorph}. In general, for $G$ a fixed finite group, Saltman has related lifting and approximation problems for $G$-Galois extensions to Noether's problem and to generic Galois extensions \cite{Saltmanp3}. Noether's problem implies more than the appearance of $G$ as a Galois group. It further gives ways in which $G$-Galois extensions can be `parameterized' See \cite{Noether} and Saltman's survey article \cite{Saltmansurvey}.

\section{Tate module representations}
For $E$ over $\Q$ with Weierstrass model $y^2=x^3+ax+b$, the $n$-division polynomials are a recursively defined sequence in $\Z[x,y,a,b]$ $\psi_0=0$, $\psi_1=1$, $\psi_2=2y$, $\psi_3=3x^4+6ax^2+12bx-a^2$, $\psi_4=4y(x^6+ax^4+20bx^3-5a^2x^2-4abx-8b^2-a^3), \ldots, \psi_{2m+1}=\psi_{m+2}\psi_m^3-\psi_{m-1}\psi_{m+1}^3$ for $m \geq 2$, $\psi_{2m}=\left( \frac{\psi_m}{2y} \right)(\psi_{m+2}\psi_{m-1}^2-\psi_{m-2}\psi_{m+1}^2)$ for $m \geq 3$. See \cite{Silverman}, \cite{Washington} for background on elliptic curves and division polynomials.

Let $P=(x_P,y_P) \in E(\Q) \setminus \left\{ \mathcal{O} \right\}$. Define $$[n]P=P \oplus P \oplus \ldots \oplus P = \left( \frac{\phi_n(x_P)}{\psi_n^2(x_P)}, \frac{\omega_n(x_P,y_P)}{\psi_n^3(x_P, y_P)} \right)$$ where $\phi_n=x\psi_n^2-\psi_{n+1}\psi_{n-1}$, $\omega_n=\frac{\psi_{n+2}\psi_{n-1}^2-\psi_{n-2}\psi_{n+1}^2}{4y}$.

Define the $n$-division points of $E$ as $E[n]=\left\{ P \in E(\overline{Q}) | [n]P=\mathcal{O} \right\}$. Denote the $n$-division field by $\Q(E[n])$. This is the field obtained by adjoining all of the $n$-division points of $E$ to $\Q$. Fix a prime $\ell$ and define the Tate module to be the inverse limit $T_{\ell}(E) = \ilim_n E[\ell^n]$.

Fix an algebraic closure $\overline{\Q}$ of $\Q$. Let $G_{\Q}$ denote the absolute Galois group of $\overline{\Q}$ over $\Q$. Then, $G_{\Q}$ permutes the $n$-division points. Let $\overline{\rho}_{E,n}:G_{\Q} \rightarrow \mathrm{Aut}(E[\ell^n])$ denote the representation of the absolute Galois group on the $n$-division points. 

The $[\ell^n]$-division points are isomorphic to $(\Z/ \ell^n \Z)^2$, so after a choice of basis, $\mathrm{Aut}(E[\ell^n])$ can be identified with $\mathrm{GL}_2( \Z/ \ell^n\Z)$. 

Taking inverse limits, yields the $\ell$-adic Tate representation, which we will denote $\rho_{E,\ell}: G_{\Q} \rightarrow \mathrm{GL}_2(\Z_{\ell})$. 

Serre shows the following lifting and surjectivity results for the $\ell$-adic representations (see \cite{Serre68}, \cite{Serre72}):

\begin{itemize}
\item $\ell=2$ The map $\rho_{E,2}$ is surjective if and only if $\overline{\rho}_{E,8}$ is surjective.
\item $\ell =3$ The map $\rho_{E,3}$ is surjective if and only if $\overline{\rho}_{E,9}$ is surjective.
\item $\ell \geq 5$ The map $\rho_{E,\ell}$ is surjective if and only if $\overline{\rho}_{E,\ell}$ is surjective.
\end{itemize}

\section{Preimages of a point}

Now, fix a point $P=(z,w) \in E(\Q)$. Consider the set 
$$ V=[n]^{-1}P = \left\{ Q \in E(\overline{\Q}) | [n]Q =P \right\}$$

For example, when $P=\mathcal{O}$, this is the set of $n$-division points. This is no longer group in general, but we can still adjoin the coordinates of such points to $\Q$ and find the Galois group of the extension. From now on, let $P \in E(\Q) \backslash \left\{ \mathcal{O} \right\}$.

The Galois group of $\Q([n]^{-1}P)$ over $\Q$ is a subgroup of the affine general linear group

$$ 1 \rightarrow (\Z/n\Z)^2 \rightarrow \mathrm{AGL}_2(\Z/n\Z) \rightarrow \mathrm{GL}_2(\Z/n\Z) \rightarrow 1 $$

e.g. for $n=2$

$$ 1 \rightarrow (\Z/2\Z)^2 \rightarrow S_4 \rightarrow S_3 \rightarrow 1$$

$$ \mathrm{AGL}_2(\Z/n\Z) = \left\{  \left( \begin{array}{ccc}
a & b & e \\
c & d & f \\
0 & 0 & 1 \end{array} \right) : a,b,c,d,e,f \in (\Z / n \Z) \text{ and } ad-bc \ne 0  \right\}$$

Fxing $\ell$ and taking an inverse limit yields a representation

$$ \rho_{E,\ell, P}: G_{\QQ} \rightarrow \mathrm{Aut}(\ZZ_{\ell}^2) \simeq \mathrm{AGL}_2(\ZZ_{\ell})$$

Affine general linear representations associated to elliptic curves have been studied, for example, see \cite{Cremona1}, \cite{Gorman}, and \cite{somos}. 

\subsection{Division by 2}

Fixing $n=2$, we find that $\phi_2(x)=4x(x^3+ax+b)-(3x^4+6ax^2+12bx-a^2)=
x^4-2ax^2-8bx+a^2$. Fix $P=(z,w)$. If $Q=(x,y) \in [2]^{-1}P$, then 
$$ (z,w) = \left( \frac{x^4-2ax^2-8bx+a^2}{4(x^3+ax+b)}, \frac{(x^6+5ax^4+20bx^3-5a^2x^2-4abx-8b^2-a^3)}{2(x^3+ax+b)} \right)$$

Solving, we find that $f_x=f_{E,[2]^{-1}P,x}(x)=(x^4-2ax^2-8bx+a^2)-z(4(x^3+ax+b))=0$ and $f_{x,y}=f_{E,[2]^{-1},P,x,y}(x,y)=(x^6+5ax^4+20bx^3-5a^2x^2-4abx-8b^2-a^3)-w(8y(x^3+ax+b))$. Bayer and Frey \cite[p.402]{BF} studied $f_x$ in order to study Galois representations of octahedral type and 2-covering of elliptic curves.

Example: Let $E$ be the elliptic curve with Cremona reference `83a1'. Then $a=1269$, $b=-10746$, and $\Delta=2^{12}3^{12}83$. By \cite{Dokchitsers}, $\overline{\rho}_{E,2}$ is surjective if and only if $x^3+ax+b$ is surjective and $\Delta \not \in \mathbb{Q}^2$ The Galois group of the splitting field of $x^3+ax+b$ over $\Q$ is $S_3 \simeq \mathrm{GL}_2(\Z/2\Z)$. Therefore, $\overline{\rho}_{E,2}$ is surjective. In fact, $\overline{\rho}_{E,8}$ is surjective, so $\rho_{E,2}$ is surjective. See \cite{RouseZB} for more discussion about the images of 2-adic Galois representations associated to elliptic curves. Also, the discriminant of the maximal order of the degree 3 field obtained by adjoining one root of the 2-division polynomial is $83$.

The polynomial $T_4(x)=x^{12} + 54bx^{10} + (132a^3 + 891b^2)x^8 + (432a^3b + 2916b^3)x^6 + (-528a^6 - 7128a^3b^2 - 24057b^4)x^4 + (864a^6b + 11664a^3b^3 + 39366b^5)x^2 - 64a^9 - 1296a^6b^2 - 8748a^3b^4 - 19683b^6$ given in \cite{note} is $x^{12} - 2^23^6199^1x^{10} + 2^83^{13}11^183^1x^8 - 2^{11}3^{18}83^1199^1x^6 -  2^{16}3^{25}11^183^2x^4 - 2^{18}3^{30}83^2199^1x^2 - 2^{24}3^{36}83^3$ and its Galois group is $\mathrm{GL}_2(\Z/4\Z)$ in this case.

Fix the point $P=(15, -108) \in E(\Q)$. Then the $[2]^{-1}P$ polynomials are given by $f_x(x)=x^4 - 60x^3 - 2538x^2 + 9828x + 2255121$ and $f_{x,y}(x,y)=x^6 + 6345x^4 + 864x^3y - 214920x^3 - 8051805x^2 + 1096416xy + 54546696x - 9284544y - 2967360237$. Taking $f_y=\mathrm{Res}(f_x, f_{x,y},x) = f_y=y^4 +864y^3 +34992y^2 -11292058368$, see \ref{ypoly}, we see $\mathrm{Gal}(f_x(x)/\Q)=\mathrm{Gal}(f_y(y)/\Q)=\mathrm{Gal}( f_x(x) f_y(x) ) =S_4 \simeq \mathrm{AGL}_2(\Z/2/Z)$ and the discriminant of the maximal order of the degree 4 field obtained by adjoining one root of the this polynomial is $2^483$.

Similarly, we can define  $f_x=f_{E,[4]^{-1}P,x}(x)$, $f_{xy}=f_{E,[4]^{-1}P,x,y}(x,y)$, and $f_y=f_{E,[4]^{-1}P,y}(y)$, then $\mathrm{Gal}(f_x(x) /\Q)=\mathrm{Gal}(f_y(y) /\Q)=\mathrm{Gal}(f_x(x) f_y(x)/\Q)=\mathrm{AGL}_2(\Z/4\Z)$.

\section{Geometry of a quaternion origami}

\begin{definition}
An origami is a pair $(C,f)$ where $C$ is a curve and $f: C \rightarrow E$ is a map branched at at most one point. 
\end{definition}

\begin{definition}
A deck transformation or automorphism of a cover $f: C \rightarrow E$ is a homeomorphism $g: C \rightarrow C$ such that $f \circ g = f$. 
\end{definition}

\subsection{Grothendieck background}

Let $X=E \setminus \left\{ \mathcal{O} \right\}$.

There is an exact sequence of fundamental groups (for example, see \cite{NTM})
$$ 1 \rightarrow \pi_1(X_{\overline{\Q}})  \rightarrow \pi_1(X) \rightarrow G_{\Q} \rightarrow 1$$

\noindent which group-theoretically gives rise to a representation $$\rho_X: G_{\Q} \rightarrow \mathrm{Aut}(\pi_1(X_{\overline{\Q}})).$$ Comparison theorems allow determination of the geometric fundamental group, $\pi_1(X_{\overline{\Q}})$, by considering the variety $X$ over $\C$ \cite{sga1}.

In the case of a once-punctured elliptic curve, the once-punctured torus can be deformed to a wedge sum of 2 circles and its fundamental group is a free group on 2 generators. The \'{e}tale fundamental group of $X$ is a free pro-$\ell$ group on 2 generators. Therefore, any two-generated $\ell$-group will arise as the group of deck transformations of some origami curve over $E$. For example, the quaternion group of order 8 is the smallest non-abelian example of a 2-group generated by 2 generators.

\subsection{Teichm\"{u}ller theory}

This section is due to work by Herrlich and Schmith{\"u}sen. See for example, especially, \cite{HerrlichS} on ``An extraordinary origami curve". Also, see \cite{herrlich} and \cite{HS}. 

Consider the ramified map $ f:C \rightarrow E \rightarrow E$

Letting $X=E \setminus \left\{ \mathcal{O} \right\}$, $Y=E \setminus \left\{ E[2] \right\}$, $Z=C \setminus \left\{ f^{-1}(\mathcal{O}) \right\}$, so that 

\[\begin{tikzcd}
	Z \arrow[r, "\Z/2\Z"] \arrow[rd, "Q_8"]
	& Y \arrow[d, "V_4" ] \\
	 & X
\end{tikzcd}\]

is the same as the origami with the branch points removed.

Consider the elements of the group $Q_8=\left\{ \pm 1, \pm i, \pm j, \pm k \right\}$. The defining relations are $i^2=j^2=k^2=-1$, $ij=-ji=k$, $(-1)^2=1$. Thus, $ik=-ki=-j$ and $jk=-kj=i$. Draw 8 squares, one labeled with each element of the group. Glue the squares horizontally (respectively vertically) so that the right neighbor of the square labeled $g$ has label $g \cdot i$ and its top neighbor has label $g \cdot j$.

\begin{figure}[!hb] 
\includegraphics[width=80mm]{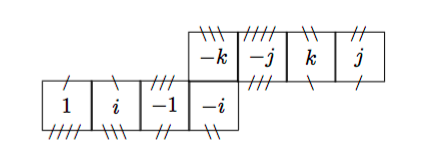}
  \end{figure}
  
  or 
  
  \begin{figure}[!hb] 
\includegraphics[width=50mm]{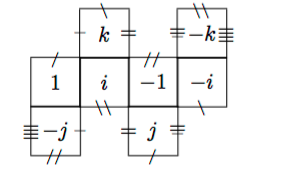}
  \end{figure}

The complex has 8 faces, 16 edges, and 4 vertices. Using the Euler characteristic formula, $2-2g=v-e+f$, the genus of the curve $C$ is 3. Next, applying the Riemann-Hurwitz formula,

$$ f: C \rightarrow E$$
Then,
$$ 2 g(C) -2=\mathrm{deg}(f) (2g(E)-2)+\displaystyle \sum_{P \in E} (e_P-1),$$ 

with $g(C)=3$, $g(E)=1$, and $\mathrm{deg}(f)=8$. This gives that there are 4 points ramified in $C$, each with ramification degree 2. 

In fact, Herrlich and Schmith{\"u}esen \cite{HerrlichS} give that the map $C \rightarrow E$ is given by $(x,y)\mapsto (x,y^2)$ and $C: y^4=x^3+Ax+B$. This is an example of a superelliptic curve. Let $E_{\lambda}: y^2=x(x-1)(x-\lambda)$ and $W_{\lambda}: y^4=x(x-1)(x-\lambda)$. Then, Herrlich and Schmith{\"u}esen show \cite[Proposition 7]{HerrlichS} that for all $\lambda \in \mathbb{P}^1 \backslash \left\{ 0,1, \infty \right\}$, the Jacobian of this genus 3 curve, $\mathrm{Jac}(W_{\lambda})$ is isogenous to $E_{\lambda} \times E_{-1} \times E_{-1}$.


Let $P=(z,w) \in E(\Q)$ be a rational point of $E$, distinct from the origin. Then $f^{-1}(P)$ consists of a set of 8 distinct points on $C$. The $x$-coordinates of these points are the $x$-coordinates of the $[2]^{-1}P$ points. The $y$-coordinates are given by square roots of the $y$-coordinates of the $[2]^{-1}P$ points. To find a polynomial for the $[2]^{-1}P$ polynomial in terms of the $y$-coordinates, we can take the resultant of the polynomials $(x^4-2ax^2-8bx+a^2)-z(4(x^3+ax+b))$ and $(x^6+5ax^4+20bx^3-5a^2x^2-4abx-8b^2-a^3)-w(8y(x^3+ax+b))$. Doing so, we find that a division polynomial for the $y$-coordinates of the $[2]^{-1}P$ points is given by 

\begin{multline*}
 y^4w^4 - 8y^3azw^3 - 8y^3bw^3 - 8y^3z^3w^3 +12y^2a^2z^2w^2+ 30y^2abzw^2 + 12y^2az^4w^2 + 18y^2b^2w^2 +18y^2bz^3w^2\\
 - 4a^5z^2 - 8a^4bz - 8a^4z^4 - 4a^3b^2 -8a^3bz^3 - 4a^3z^6 - 27a^2b^2z^2 - 54ab^3z - 54ab^2z^4 -27b^4 - 54b^3z^3 - 27b^2z^6
\end{multline*}

Next, we will reduce each coefficient using the relation $w^2=z^3+az+b$ since $P=(z,w) \in E(\Q)$.

For example, the $y^3$ coefficient is 
$$-8w^3[az+b+z^3]=-8w^3[w^2].$$

The $y^2$ coefficient of the polynomial is

\begin{align*}
& 6w^2[2a^2z^2+5abz+2az^4+3b^2+3bz^3] \\
& =6w^2[2a^2z^2+2abz+3abz+2az^4+3b^2+3bz^3] \\
&=6w^2[2az(z^3+az+b)+3b(z^3+az+b)] \\
&=6w^2[w^2][2az+3b] 
\end{align*}

Using that 
$$(w^2)^2=(z^3+az+b)^2=z^6+2bz^3+b^2+2az^4+2abz+a^2z^2$$

the constant coefficient of the polynomial is 

\begin{align*}
& -4a^3[a^2z^2+2abz+2az^4+b^2+2bz^3+z^6]-27b^2[a^2z^2+2abz+2az^4+b^2+2bz^3+z^6]\\
&=(-4a^3+27b^2)[ w^4]
\end{align*}

Therefore, the division polynomial for the $y$-coordinates of the $[2]^{-1}P$ points can be simplifed to the following:
$$ y^4w^4-8w^5y^3+6w^4[2az+3b]y^2+w^4[-4a^3+27b^2]$$

Then, if $w \ne 0$, we can divide this polynomial by $w^4$ to find:

$$y^4-8wy^3+6(2az+3b)y^2-(4a^3+27b^2)$$.
\label{ypoly}

\noindent \textbf{Example:} Fix E=`83a1' in Cremona notation and point $P=(15, -108) \in E(\Q)$. Then the $[2]^{-1}P$ polynomials are given by $f_x=x^4 - 60x^3 - 2538x^2 + 9828x + 2255121$ and $f_{xy} = x^6 + 6345x^4 + 864x^3y - 214920x^3 - 8051805x^2 + 1096416xy + 54546696x - 9284544y - 2967360237$, and $f_y(x)=x^4 + 864x^3 + 34992x^2 - 11292058368$, we see that $\mathrm{Gal}(f_y(x)/\Q) = S_4 \simeq \mathrm{AGL}_2(\Z/2/Z)$ and the discriminant of the maximal order of the degree 4 field obtained by adjoining one root of $f_y$ to $\Q$ is $2^483$. In fact, this is the same field as adjoining the roots of $f_x$ to $\Q$.

To take square roots of the roots of this polynomial, we can plug in $y^2$ for $y$ and find that
$$ f_{E,Q_8, P} =y^8-8wy^6+6(2az+3b)y^4-(4a^3+27b^2).$$

Note that $-P=(z,-w)$ is also a rational point on the curve. We have that 

$$f_{E,Q_8,-P}=y^8+8wy^6+6(2az+3b)y^4-(4a^3+27b^2).$$ 

Another way to find both $f_{E, Q_8, P}$ and $f_{E, Q_8, -P}$ is to first take the resultant of 

$$(x^4-2ax^2-8bx+a^2)-z(4(x^3+ax+b))$$ 

and 

$$y^2-(x^3+ax+b)$$ eliminating $x$. This yields

\begin{align*}
r & =y^8 - 40y^6az - 28y^6b - 64y^6z^3 - 8y^4a^3 +144y^4a^2z^2\\
& +432y^4abz + 270y^4b^2 - 96y^2a^4z - 144y^2a^3b - 648y^2ab^2z - 972y^2b^3 + 16a^6 + 216a^3b^2 + 729b^4
\end{align*}

 Let $s$ denote the polynomial obtained by evaluating $r$ at $y^2$.
        
Note that $f_{E,Q_8,P} f_{E,Q_8,-P}=(y^8-8wy^6+6(2az+3b)y^4-(4a^3+27b^2))(y^8+8wy^6+6(2az+3b)y^4-(4a^3+27b^2))$. Then, $s-f_{E,Q_8,P} f_{E,Q_8,-P}=-64y^{12}az - 64y^{12}b - 64y^{12}z^3 + 64y^{12}w^2=64y^{12}[ -az-b-z^3+w^2]$. Since $(z,w) \in E(\mathbb{Q})$, $w^2-z^3-az-b=0$, so $s=f_{E,Q_8,P} f_{E, Q_8,-P}$.

For our running example: take $E=`83a1'$ and $P=(15,-108)$. Then,

$$f_{E,Q_8,P}=y^8 + 864y^6 + 34992y^2 - 11292058368$$

 and 
 
 $$f_{E,Q_8,-P}=y^8 - 864y^6 + 34992y^2 - 11292058368.$$ 
 
 Here, 
 
 $$\mathrm{Gal}(f_{E, Q_8,P}/\Q)=\mathrm{Hol}(Q_8)$$ 
 
 and 
 
 $$\mathrm{Gal}( f_{E, Q_8,-P}/\Q)=\mathrm{Hol}(Q_8).$$ 

where the holomorph of a group is the semi-direct product $\mathrm{Hol}(G)=G \rtimes \mathrm{Aut}(G)$ where 
the action $\phi: \mathrm{Aut}(G) \rightarrow \mathrm{Aut}(G)$ is the identity \cite{Hall}.

There is exact sequence

$$ 1 \rightarrow G \rightarrow \mathrm{Hol}(G) \rightarrow \mathrm{Aut}(G) \rightarrow 1.$$
 \label{holomorph}

In particular, there is an exact sequence

$$ 1 \rightarrow Q_8 \rightarrow \mathrm{Hol}(Q_8) \rightarrow \mathrm{Aut}(Q_8) \simeq S_4 \rightarrow 1.$$

\begin{remark} 
 The group $\mathrm{Hol}(Q_8)$ is $\mathrm{SmallGroup}(192,1494)$ in Magma notation \cite{magma}. 
 \end{remark}
 
 \begin{remark} In the special case $E=`83a1'$ and $P=(15,-108)$, the $\mathrm{Gal}( f_{E,Q_8,P} f_{E, Q_8,-P}/\Q) =\mathrm{SmallGroup}(384,20090)$ in Magma notation \cite{magma}.
  \end{remark}

\begin{theorem} Let $E:y^2=x^3+ax+b$ be an elliptic curve over $\Q$. Fix a point $P=(z,w) \in E(\Q) \setminus \left\{ \mathcal{O} \right\}$. Suppose that the representation $\rho_{E, [2]^{-1}P}$ surjects onto $S_4$. Let $L_P$ denote the splitting field of the polynomial
$$f_P(x)=x^8-8wx^6+6(2az+3b)x^4-(4a^3+27b^2).$$
Then $\mathrm{Gal}(L_P/\Q) \hookrightarrow \mathrm{Hol}(Q_8)$.
\end{theorem}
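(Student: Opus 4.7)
The plan is to identify the roots of $f_P$ with the $y$-coordinates of the eight geometric preimages $f^{-1}(P) \subset C$ of the quaternion origami $f\colon C \to E$, and then to leverage the $Q_8$-torsor structure on that fiber to force the Galois image into $\mathrm{Hol}(Q_8) \subseteq \mathrm{Sym}(f^{-1}(P)) \cong S_8$. By the calculation immediately preceding the theorem, these eight $y$-coordinates are exactly the eight roots of $f_P$. Under the hypothesis that $\rho_{E,[2]^{-1}P}$ surjects onto $S_4$, the four $y$-coordinates of the $[2]^{-1}P$ preimages on $E$ are pairwise distinct and all nonzero (the latter because $4a^3+27b^2 \neq 0$ for non-singular $E$), so taking square roots yields eight distinct values and the $y$-coordinate alone pins down its preimage in $f^{-1}(P)$. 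Consequently $L_P$ is the field of definition of the set $f^{-1}(P)$, and $\mathrm{Gal}(L_P/\Q)$ acts faithfully on it.

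The core of the proof is Galois semi-equivariance of the torsor. Over $\overline{\Q}$ the composition $C \to E \to E$ of $(x,y)\mapsto(x,y^2)$ with $[2]$ is Galois with deck group $Q_8$ (the $\Z/2$ generated by $y\mapsto -y$ extended non-trivially by the $V_4$ of translations by $E[2]$), so picking any base preimage $Q_0 \in f^{-1}(P)$ yields a bijection $Q_8 \xrightarrow{\sim} f^{-1}(P)$, $g \mapsto g\cdot Q_0$. For $\sigma \in G_\Q$, conjugation $g \mapsto \sigma g \sigma^{-1}$ is an automorphism $\tau_\sigma$ of $Q_8$ (Galois normalizes the deck group of a $\Q$-defined cover), and $\sigma(Q_0) = h_\sigma \cdot Q_0$ for a unique $h_\sigma \in Q_8$. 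Combining these via $\sigma g = \tau_\sigma(g)\sigma$ gives
$$\sigma(g\cdot Q_0) \;=\; \tau_\sigma(g)\cdot \sigma(Q_0) \;=\; \bigl(\tau_\sigma(g)\,h_\sigma\bigr)\cdot Q_0,$$
i.e.\ $\sigma$ acts on $f^{-1}(P)$ as $g \mapsto \tau_\sigma(g) h_\sigma$, precisely the action of $(h_\sigma,\tau_\sigma) \in Q_8 \rtimes \mathrm{Aut}(Q_8) = \mathrm{Hol}(Q_8)$ in its standard faithful degree-$8$ permutation representation on $Q_8$. The cocycle identities $\tau_{\sigma\sigma'} = \tau_\sigma\tau_{\sigma'}$ and $h_{\sigma\sigma'} = \tau_\sigma(h_{\sigma'}) h_\sigma$ verify that $\sigma \mapsto (h_\sigma,\tau_\sigma)$ is a group homomorphism $G_\Q \to \mathrm{Hol}(Q_8)$; combined with the faithful-action result of the first paragraph, it induces the desired injection $\mathrm{Gal}(L_P/\Q) \hookrightarrow \mathrm{Hol}(Q_8)$.

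The hardest step is the preliminary one: ensuring that the Herrlich--Schmith\"usen $Q_8$-cover descends to $\overline{\Q}$ as a Galois cover whose deck group is stable under $G_\Q$-conjugation, so that $\tau_\sigma$ is well defined. The $\Z/2$-generator $y\mapsto -y$ is $\Q$-rational, but the $V_4$-part of $Q_8$ consists of translations by $E[2]$ and is only rational over $\Q(E[2])$; one must verify both the lifted $Q_8$-relations on the nose and that Galois conjugation preserves $Q_8 \subset \mathrm{Aut}_{\overline{\Q}}(C/E)$ rather than mapping it to another isomorphic copy of $Q_8$. The $S_4$-surjectivity hypothesis is the source of both the faithful-action claim and of consistency with the naive order bound $[L_P:\Q] \leq 24 \cdot 8 = 192 = |\mathrm{Hol}(Q_8)|$ obtained from the rational product relation $y_1 y_2 y_3 y_4 = -(4a^3+27b^2)$ among the roots of the intermediate quartic $y^4 - 8w y^3 + 6(2az+3b) y^2 - (4a^3+27b^2)$.
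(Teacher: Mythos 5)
Your argument is correct, and it is genuinely different from the one in the paper. The paper's proof is entirely computational: it first invokes Odoni's lemma to place the Galois group in the wreath product $S_2 \wr S_4$, shows the discriminant of $f_P$ is not a square (via the $(4a^3+27b^2)^3$ factor and the absence of rational $2$-torsion) to rule out containment in $A_8$, and then rules out the full wreath product by following Mattman's resolvent method --- constructing a degree-$12$ resolvent $h(x)$ and verifying by explicit polynomial manipulation (using the relation $z^3+az+b=w^2$) that $h$ factors over $\Q(\sqrt{D})$, which by Mattman's criterion forces the group into $\mathrm{Hol}(Q_8)$. Your argument instead works geometrically: you identify the roots of $f_P$ with the fiber $f^{-1}(P)$ of the $Q_8$-Galois origami cover and observe that the Galois action on a torsor under a $\Q$-rationally defined Galois cover automatically factors through the holomorph. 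Your approach explains \emph{why} $\mathrm{Hol}(Q_8)$ appears, whereas the paper's approach is self-contained at the level of polynomials and does not presuppose the $Q_8$-Galois structure. One small simplification to note in your last paragraph: once you accept that $C\to E$ is Galois over $\overline{\Q}$ with $\mathrm{Aut}_{\overline{\Q}}(C/E)\cong Q_8$ (the Herrlich--Schmith\"usen input, which the paper also cites), the $G_\Q$-stability of $Q_8$ inside $\mathrm{Aut}_{\overline{\Q}}(C)$ is automatic --- it is the full relative automorphism group of a $\Q$-defined morphism, hence preserved by conjugation --- so the only substantive thing to check is the citable geometric fact itself, not a separate stability claim. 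Your justification that the eight $y$-coordinates are distinct and pin down the eight preimages (using the $S_4$-surjectivity to rule out coincidences among the $[2]^{-1}P$ $y$-values and $4a^3+27b^2\ne 0$ to rule out zero) is sound and plays the same role as the nondegeneracy input in the paper's discriminant step.
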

\label{Galoisgroup}

\begin{proof}
Let $\Delta=-16(4a^3+277b^2)$. Let $r(x)=x^4-8wx^3+6(2az+3b)x^2-(4a^3+27b^2)$ be the degree 4 polynomial whose roots are the $y$-coordinates of $[2]^{-1}P$ and $s(x)=x^2$. Then, $f_P(x)=r(s(x))$.

We have that $\mathrm{Gal}( \Q([2]^{-1}P)/\Q) \leq \mathrm{AGL}_2(\Z/2\Z) \simeq S_4$. Since this representation is surjective, the resolvent cubic of $r(x)$ has Galois group, $S_3$ over $\Q$. This is the same as the mod-2 representation of $E$ by studying affine general linear representations. Therefore, $\overline{\rho}_{E,2}$ is surjective. Thus, $E$ has no rational points of order 2, so by \cite[Corollary 5.3(3)]{BK}, neither of $\pm \Delta$ is a square.

By Odoni \cite{Odoni}[Lemma 4.1], $\mathrm{Gal}(r(s(x))/\Q ) \hookrightarrow S_2 \wr S_4 =\mathrm{SmallGroup}(384,5604)$. 

By Proposition 9.7.2 in \cite{Goodman}, the Galois group of the polynomial over $\Q$ is contained in $A_8$, the alternating group of order 8, if and only the discriminant of the polynomial has a square root in $\Q$. 

Suppose $A=a_nx^n+\ldots +a_0$. Then,
$\mathrm{Disc}_x(A)=\frac{(-1)^{\left(\frac{n(n-1)}{2}\right)}}{a_n}\mathrm{Res}_x(A,A')$, where $A'$ is the derivative of $A$. 
Taking $A=x^8-8wx^6+6(2az+3b)x^4-(4a^3+27b^2)$, then $A'=8x^7-48wx^5+24(2az+3b)x^3$ and the discriminant is 
$-2^{32}(4a^3 + 27b^2)^3
(81z^4a^4 - 108z^3w^2a^3 + 486z^3a^3b - 486z^2w^2a^2b + 18z^2a^5 + 1215z^2a^2b^2 - 108zw^2a^4 - 
    1458zw^2ab^2 + 54za^4b + 1458zab^3 + 108w^4a^3 + 729w^4b^2 - 162w^2a^3b - 1458w^2b^3 + a^6 + 54a^3b^2 + 729b^4)^2$. Because of the $(4a^3 + 27b^2)^3$ factor and since $\pm \Delta$ is not a square, the discriminant of the polynomial is not a square and therefore the Galois group is not contained in $A_8$.

Next, we consider the transitive subgroups of $S_8$, which are also subgroups of $S_2 \wr S_4$, which are not subgroups of $A_8$, and that have an $S_4$-quotient. Except for $S_2 \wr S_4$, all such subgroups are subgroups of $\mathrm{Hol}(Q_8)$, so it remains to rule out $S_2 \wr S_4$ as a possible Galois group.

To do so, we will follow \cite[p.37]{Mattman}. Consider a degree 8 polynomial $f(x)=\displaystyle \prod_{i=1}^8 (x - \alpha_i)$ in the form $r(x^2)$, where $r(x)=x^4+c_3x^3+c_2x^2+c_1x+c_0=(x-r_1)(x-r_2)(x-r_3)(x-r_4)$. Suppose that $\alpha_1 \alpha_8=r_1$, $\alpha_2 \alpha_7=r_2$, $\alpha_3 \alpha_6=r_3$, and $\alpha_4 \alpha_5=r_4$. To distinguish between Galois groups $\mathrm{Hol}(Q_8)$ and $S_2 \wr S_4$, we first construct the polynomial $g$ as the 2-set resolvent of the degree 4 factor of the 2-set resolvent of $f$. Next, the 3-set resolvent of $g$ has an irreducible factor $h$ of degree 12. If $\mathrm{Gal}(f/\Q)=\mathrm{Hol}(Q_8)$ (respectively $S_2 \wr S_4$), then $h$ factors (respectively is irreducible over $k(\sqrt{D})$, where $D$ is the discriminant of $f$.

The degree 4 factor of the 2-set (product) resolvent of $f$ is a degree 4 polynomial with roots $\alpha_1\alpha_8$, $\alpha_2\alpha_7$, $\alpha_3\alpha_6$, $\alpha_4\alpha_5$. This polynomial is $\displaystyle \prod_{i=1}^4 (x - r_i) =r(x) = x^4-8wx^3+6(2az+3b)x^2-(4a^3+27b^2)$, the $[2]^{-1}P$ polynomial. The polynomial $k(x)$ is the 2-set resolvent of $r(x)$, so $k(x)=(x-r_1r_2)(x-r_3r_4)(x-r_1r_3)(x-r_2r_4)(x-r_1r_4)(x-r_2r_3)$. Let $s_1=r_1r_2$, $s_2=r_3r_4$, $s_3=r_1r_3$, $s_4=r_2r_4$, $s_5=r_1r_4$, $s_6=r_2r_3$ be the roots of $k(x)$.

In general, for a degree 4 polynomial $r(x)=(x-r_1)(x-r_2)(x-r_3)(x-r_4)=x^4+c_3x^3+c_2x^2+c_1x+c_0$ defined over a number field $k$, the degree 6 2-set (product) resolvent polynomial is given by $k(x)=x^6-c_2x^5+(-c_1c_3-c_0)x^4+(-c_0(c_3^2-2c_2)-c_1^2)x^3+ c_0(-c_1c_3-c_0)x^2-c_0^2c_2x+c_0^3$. In the case that $r(x)=x^4-8wx^3+6(2az+3b)x^2-(4a^3+27b^2)$, the degree 6 polynomial $k(x)$ is given by $k(x)=x^6-(12az + 18b)x^5+(4a^3+27b^2)x^4+(4a^3+27b^2)(64w^2-24az-36b)x^3-(4a^3+27b^2)^2x^2-(4a^3+27b^2)^2(12az + 18b)x+(-4a^3 - 27b^2)^3$. 

For example, for elliptic curve `83a1' with $P=(z,w)=(15,-108)$, $f_P(x)=x^8+864x^6+2^43^7x^4-2^83^{12}83^1$, $r(x)=x^4+864x^3+2^43^7x^2-2^83^{12}83^1$, $k(x)=x^6 - 2^43^7x^5 + 2^83^{12}83^1x^4 + 2^{13}3^{18}29^183^1x^3 -2^{16}7^119^1103^12281^12579^15209^1x^2 -2^{20}3^{31}83^2x -2^{24}3^{36}83^3$.

Next, the 3-set (sum) resolvent of $k(x)$ has an irreducible degree 12 factor $h(x)=(x-(s_1+s_2+s_3))(x-(s_1+s_2+s_4))(x-(s_1+s_2+s_5))(x-(s_1+s_2+s_6))(x-(s_1+s_3+s_4))(x-(s_1+s_5+s_6))(x-(s_2+s_3+s_4))(x-(s_2+s_5+s_6))(x-(s_3+s_4+s_5))(x-(s_3+s_4+s_6))(x-(s_3+s_5+s_6))(x-(s_4+s_5+s_6))$.

Using Vieta's formulas, gives that the degree 12 factor of the 3-set (sum) resolvent of $k(x)$ is given in \hyperref[appendixA]{Appendix A}.

 The degree 12 polynomial $h(x)$ factors as $(x^6+(-36az-54b)x^5+(20a^3 + 432a^2z^2 + 1296abz + 1107b^2)x^4+(-480a^4z - 720a^3b - 1728a^3z^3 - 7776a^2bz^2 - 14904ab^2z - 10692b^3)x^3 +(240a^6 + 4608a^5z^2 + 13824a^4bz - 3072a^4zw^2 + 13608a^3b^2 - 4608a^3bw^2 + 
    31104a^2b^2z^2 + 93312ab^3z - 20736ab^2zw^2 + 80919b^4 - 31104b^3w^2)x^2+v_1x+v_3)(x^6+(-36az-54b)x^5+(20a^3 + 432a^2z^2 + 1296abz + 1107b^2)x^4+(-480a^4z - 720a^3b - 1728a^3z^3 - 7776a^2bz^2 - 14904ab^2z - 10692b^3)x^3+(240a^6 + 4608a^5z^2 + 13824a^4bz - 3072a^4zw^2 + 13608a^3b^2 - 4608a^3bw^2 + 
    31104a^2b^2z^2 + 93312ab^3z - 20736ab^2zw^2 + 80919b^4 - 31104b^3w^2)x^2+v_2x+v_4)$, where $v_1, v_2, v_3, v_4$ are as defined in  \hyperref[appendixA]{Appendix A}.
    
 Therefore, if $v_1, v_2, v_3, v_4 \in \mathbb{Q}(\sqrt{D})$ where $D=\mathrm{Discriminant}(f)$, then $h(x)$ is reducible over $\mathbb{Q}(\sqrt{D})$.
 
 For $f(x)=f_{E,P}(x)=x^8-8wx^6+6(2az+3b)x^4-(4a^3+27b^2)$, $D=-2^{32} (4a^3+27b^2)^3(a^6 + 18a^5z^2 + 54a^4bz + 81a^4z^4 - 108a^4zw^2 + 54a^3b^2 + 486a^3bz^3 - 162a^3bw^2 - 108a^3z^3w^2 + 108a^3w^4 + 1215a^2b^2z^2 - 486a^2bz^2w^2 + 1458ab^3z - 1458ab^2zw^2 + 729b^4 - 1458b^3w^2 + 729b^2w^4)^2$, so $\Q(\sqrt{D})=\Q(\sqrt{-(4a^3+27b^2)})$.
 
We will next consider the discriminants $d_1$ and $d_2$ of the 2 quadratic polynomials that define the roots $v_i$. Then, $d_1=2^{14}(4a^3+27b^2)^3(a^6 + 18a^5z^2 + 54a^4bz + 81a^4z^4 - 108a^4zw^2 + 54a^3b^2 + 486a^3bz^3 - 162a^3bw^2 - 108a^3z^3w^2 + 108a^3w^4 + 1215a^2b^2z^2 - 486a^2bz^2w^2 + 1458ab^3z - 1458ab^2zw^2 + 729b^4 - 1458b^3w^2 + 729b^2w^4)$ and $d_2=-2^{14}3^2(2az+3b)^2(4a^3+27b^2)^3(a^6 + 18a^5z^2 + 54a^4bz + 81a^4z^4 - 108a^4zw^2 + 54a^3b^2 + 486a^3bz^3 - 162a^3bw^2 - 108a^3z^3w^2 + 108a^3w^4 + 1215a^2b^2z^2 - 486a^2bz^2w^2 + 1458ab^3z - 1458ab^2zw^2 + 729b^4 - 1458b^3w^2 + 729b^2w^4)$.

Let
$q=(a^6 + 18a^5z^2 + 54a^4bz + 81a^4z^4 - 108a^4zw^2 + 54a^3b^2 + 486a^3bz^3 - 162a^3bw^2 - 108a^3z^3w^2 + 108a^3w^4 + 1215a^2b^2z^2 - 486a^2bz^2w^2 + 1458ab^3z - 1458ab^2zw^2 + 729b^4 - 1458b^3w^2 + 729b^2w^4)$ and let $u=(27bz^3-9a^2z^2-a^3)^2$.

Then, $q-u=3^2(z^3+az+b-w^2)(2az+2b+2z^3-2w^2)=0$, since $z^3+az+b=w^2$.

Therefore, using the relation that $P=(z,w)$ is a point on the elliptic curve, $d_1$ and $d_2$ simplify to $d_1=-2^{14}(4a^3+27b^2)^3(27bz^3-9a^2z^2-a^3)^2$
and $d_2=-2^{14}3^2(2az+3b)^2(4a^3+27b^2)^3(27bz^3-9a^2z^2-a^3)^2$. Therefore, $p_1$ factors over $\Q(\sqrt{d_1})=\Q(\sqrt{D})$ and $p_2$ factors over $\Q(\sqrt{d_2})=\Q(\sqrt{D})$. Therefore, $h(x)$ factors over $\Q(\sqrt{D})$, which implies that $\mathrm{Gal}(f/\Q) \leq \mathrm{Hol}(Q_8)$.
 
\end{proof}

\begin{remark} Consider the embedding $Q_8 \hookrightarrow S_8$ $i \mapsto (1,3,2,4)(5,7,6,8)$ and $j \mapsto (1,5,2,6)(3,8,4,7)$. Then the subgroup of $S_8$ generated by $i$ and $j$ is isomorphic to $Q_8$ and the normalizer of this group in $S_8$ is isomorphic to the holomorph, $\mathrm{Hol}(Q_8)$.
\end{remark}

For example, for elliptic curve `83a1' with $P=(z,w)=(15,-108)$, $f(x)=f_{E,P}(x)=x^8+864x^6+2^43^7x^4-2^83^{12}83^1$ and $r(x)=x^4+864x^3+2^43^7x^2-2^83^{12}83^1$. The discriminant $D$ of $f(x)$  is $2^{72}3^{84}83^3739^4$. For the polynomial $f(x)$ there are the associated resolvent polynomials, $k(x)=x^6 - 2x^5 - 4x^4 - 16x^3 + 16x^2 - 32x - 64$ and $h(x)=x^{12} - 2^53^8x^{11} + 2^83^{12}5^1193^1x^{10} - 2^{13}3^{19}5^1433^1x^9 +
    2^{18}3^{24}5^17^143^171^1x^8 - 2^{21}3^{31}740603^1x^7 +
    2^{24}3^{36}257^1159337^1x^6 -
    2^{29}3^{44}5^17^161^173^183^1x^5 -
    2^{32}3^{48}5^283^1103^162003^1x^4 +
    2^{38}3^{55}5^111^137^183^2491^1x^3 + \\
    2^{42}3^{60}7^183^2731921^1x^2 -
   2^{48}3^{67}7^183^34153^1x +
    2^{52}3^{72}83^3227^1340633^1$.

The polynomial $h(x)$ is irreducible over $\mathbb{Q}$. Over, $\mathbb{Q}(\sqrt{D})$, $h(x)$ factors as $(x^6 - 2^43^8x^5 + 2^93^{12}13^117^1x^4 - 2^{12}3^{19}839^1x^3 +
        2^{16}3^{26}5^383^1x^2 + 1/(2^{14}3^{12}739^1)(-\sqrt{D} -
        2^{37}3^{46}5^183^1739^1)x + 1/(2^{11}3^5739^1)(\sqrt{D} -
       2^{36}3^{41}5^183^2383^1739^1))(x^6 - 2^43^8x^5 + 2^93^{12}13^117^1x^4 - 2^{12}3^{19}839^1x^3 +
        2^{16}3^{26}5^383^1x^2x^2 + 1/(2^{14}3^{12}739^1)(\sqrt{D} -
        2^{37}3^{46}5^183^1739^1)x + 1/(2^{11}3^5739^1)(-\sqrt{D} -
        2^{36}3^{41}5^183^2383^1739^1))$
        
In this example, $D=2^{72}3^{84}83^3739^4$, , $p_1=x^2 + 2^{24}3^{34}5^183^1x + 2^{44}3^{60}83^245984143^1$, $p_2=x^2 + 2^{26}3^{36}5^183^2383^1x + 
    2^{52}3^{72}83^3227^1340633^1$ $d_1=2^{46}3^{60}83^3739^2$, $d_2= 2^{52}3^{74}83^3739^ 2$.

\textbf{Non-example:} Consider the degree 8 polynomial $f(x)=x^8 - 2x^6 + 2x^4 + 4x^2 - 4$. We can compute that $D=\mathrm{Discriminant}(f)=-2^{26}83^2$. In this case, $k(x)=x^6 - 2x^5 - 4x^4 - 16x^3 + 16x^2 - 32x - 64$ and $h(x)=x^{12} - 12x^{11} + 68x^{10} - 240x^9 + 832x^8 - 3008x^7 + 7104x^6 - 8192x^5 +14592x^4 - 46080x^3 + 239616x^2 - 385024x + 671744$. This degree 12 polynomial $h(x)$ does not factor over $\mathbb{Q}(\sqrt{D})$. By computation, we also find that $\mathrm{Gal}(f(x)/\Q) \simeq S_2 \wr S_4$. 

In this case, $D=2^{16}7^171^110711^1$, $\mathrm{Discriminant}(r(x))=2^883$, $p_1=x^2 + 512x + 405504$, $p_2=x^2 + 1152x + 671744$, $d_1=d_2=-2^{14}83$.

\begin{theorem}
Let $G=\mathrm{Hol}(Q_8)$. Then, $G$ has 3 $S_4$-quotients. Suppose that $\mathrm{Gal}( f_{E,Q_8,P}(x)/\Q )=G$. Then, the 3 subfields with $S_4$ Galois group over $\Q$ are the splitting fields of the following polynomials:
  $$h_1=x^4-512dx^2 +2^{15}dw^2x  + 2^{16}d(d+w^2(12az-36b)) +  2^{18}d(27bz^3 - 9a^2z^2 - a^3)$$
  $$h_2=x^4 -512dx^2 +2^{15}dw^2x  + 2^{16}d(d+w^2(12az-36b))$$
  $$ h_3=x^4-8wx^3+6(2az+3b)x^2-d$$
  where $d=4a^3+27b^2$.
\end{theorem}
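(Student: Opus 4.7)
The plan is to combine a group-theoretic enumeration of $S_4$-quotients of $\mathrm{Hol}(Q_8)$ with an explicit construction of three degree-$4$ resolvents of $f_{E,Q_8,P}(x)$. First I would verify in Magma (consistent with the notational conventions used elsewhere in the paper) that $\mathrm{Hol}(Q_8) \simeq \mathrm{SmallGroup}(192,1494)$ contains exactly three distinct normal subgroups of order $8$ whose quotient is $S_4$. One of these is the canonical copy of $Q_8$ arising in the semidirect product $\mathrm{Hol}(Q_8) = Q_8 \rtimes \mathrm{Aut}(Q_8)$; the other two are ``twisted'' copies that are interchanged by the outer-automorphism action $\mathrm{Aut}(Q_8) \twoheadrightarrow \mathrm{Out}(Q_8) \simeq S_3$. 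Thus the splitting field $L_P$ contains exactly three $S_4$-subfields, and it remains to identify each one with the splitting field of the correct polynomial.

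For the subfield cut out by the canonical $Q_8$, the quotient $\mathrm{Hol}(Q_8)/Q_8 \simeq \mathrm{Aut}(Q_8) \simeq S_4$ is realized by the intermediate cover $Z \to Y \to X$ constructed earlier with $Y = E \setminus E[2]$: the fixed field of the canonical $Q_8$ is generated by the $y$-coordinates of the preimages $[2]^{-1}P$, hence is the splitting field of $r(x) = x^4 - 8wx^3 + 6(2az+3b)x^2 - d$ from the previous theorem, which is exactly $h_3(x)$. For $h_1$ and $h_2$, I would build resolvent polynomials whose roots are appropriate elementary symmetric functions of the eight roots $\alpha_1,\dots,\alpha_8$ of $f_{E,Q_8,P}$ taken over the cosets of $N_1$ and $N_2$ respectively, guided by the decomposition of the resolvent $h(x)$ in the proof of the preceding theorem; recall that $h(x)$ already factored over $\Q(\sqrt{D}) = \Q(\sqrt{-d})$ into two degree-$6$ factors, and the two $\sqrt{-d}$-conjugate pairs visible there should package into the two new degree-$4$ polynomials. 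That $h_1$ and $h_2$ differ only in their constant terms by $2^{18}d(27bz^3 - 9a^2z^2 - a^3)$, a quantity whose square governs the simplified discriminants $d_1,d_2$ of the previous proof, is strong evidence that this is the correct source.

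The main obstacle is the polynomial bookkeeping. Producing $h_1$ and $h_2$ in the stated closed form requires identifying which $N_i$-coset sums to compute and then collapsing large polynomial expressions modulo the Weierstrass relation $w^2 = z^3 + az + b$, exactly as was done in the derivation of $f_{E,Q_8,P}$ itself. Once the three candidate polynomials are in hand, verifying that each cuts out an $S_4$-extension rather than a proper subgroup is straightforward: one computes each resolvent cubic and its discriminant and applies the hypothesis that $\overline{\rho}_{E,2}$ is surjective, together with the assumption $\mathrm{Gal}(f_{E,Q_8,P}/\Q) = \mathrm{Hol}(Q_8)$. As a final consistency check, the Bayer--Frey and Chen--Deligne results cited in the introduction force the existence of a third $S_4$-extension of $\Q$ containing the $2$-division field $\Q(E[2])$ as soon as two such extensions are known, so any two of $h_1,h_2,h_3$ automatically determine the third and must agree with the enumeration from the group-theoretic step.
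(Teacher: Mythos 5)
The paper's own proof of this theorem is a single sentence deferring to Adelmann's procedure for constructing defining polynomials of subfields cut out by quotients of the Galois group, so your high-level plan (enumerate the $S_4$-quotients of $\mathrm{Hol}(Q_8)$ via its three normal subgroups of order $8$, then construct a degree-$4$ resolvent for each) is consistent with the route the paper indicates. Your identification of $h_3$ with the degree-$4$ polynomial $r(x)=x^4-8wx^3+6(2az+3b)x^2-d$ whose roots are the $y$-coordinates of $[2]^{-1}P$ is correct and is exactly the role $r(x)$ plays throughout the preceding section.

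There is, however, a real gap in the mechanism you propose for producing $h_1$ and $h_2$. You suggest that the two degree-$6$ factors of $h(x)$ over $\Q(\sqrt{D})$ from the previous theorem should ``package into'' the two new degree-$4$ polynomials. This cannot work as stated: those factors have degree $6$, not $4$, and the resolvent $h(x)$ was engineered specifically to separate $\mathrm{Hol}(Q_8)$ from $S_2 \wr S_4$, not to exhibit the fixed fields of the normal order-$8$ subgroups. To produce $h_1$ and $h_2$ one must build fresh degree-$4$ resolvents: for each normal subgroup $N_i$ of order $8$ one needs an element $\theta_i \in L_P$ whose stabilizer is the full preimage in $G$ of a point stabilizer $S_3 \le G/N_i \simeq S_4$, i.e.\ a subgroup of index $4$ and order $48$, and then compute its minimal polynomial by the elimination-and-reduction-mod-$w^2=z^3+az+b$ technique used elsewhere in the paper. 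You correctly flag ``polynomial bookkeeping'' as the main obstacle, but identifying the correct invariants $\theta_1, \theta_2$ is not mere bookkeeping; it is the substance of the construction and is left undone. The observed relation between the constant-term difference $h_1-h_2 = 2^{18}d(27bz^3-9a^2z^2-a^3)$ and the simplified discriminants $d_1, d_2$ is a reasonable consistency check after the fact, and the Bayer--Frey/Calegari-style argument does guarantee a third common $S_4$-extension once two are known, but neither substitutes for actually exhibiting the polynomials.
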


\begin{proof}
See the note about the procedure to find defining polynomials for quotients in a similar fashion to Adelmann \cite{note} \cite{Adelmann}.
\end{proof}

Let $\Delta=-16(4a^3+27b^2)$ and $g=x^4-4\Delta x-12 a \Delta$, the polynomial that cuts out the unique $S_4$-quotient of the $\mathrm{GL}_2(\Z /4\Z)$-extension of $\Q$ given by $\Q(E[4])/\Q$ when $\overline{\rho}_{E,4}$ is surjective. See Adelmann \cite{Adelmann} for more detail about the 4-division polynomial and about $g(x)$.

\begin{theorem}[ Main Theorem ]
Let $k_g/\Q$ be the field extension given by adjoining a root of $g=x^4-4\Delta x-12 a \Delta$ to $\Q$. Let $k_1/\Q$ be the field extension given by adjoining a root of $h_1$ to $\Q$. 

Let $\alpha$ be a root of $k_g$. Take $\beta = \frac{-1}{3^2b}(\alpha^3-4a\alpha^2+(8a^2-72bz)\alpha +2^4 3 d)$. This map gives an isomorphism between $k_g$ and $k_1$. Note that the isomorphism depends on the point $P$ as does $k_1$, whereas $k_g$ is independent of the point.
\end{theorem}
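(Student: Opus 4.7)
The plan is to reduce the theorem to an explicit polynomial identity, then conclude by a short degree argument. Let $\alpha$ be a root of $g(x) = x^4 - 4\Delta x - 12 a \Delta$, so that inside $k_g = \Q(\alpha)$ we have the reduction rule $\alpha^4 = 4\Delta\alpha + 12 a \Delta$, and let $\beta \in k_g$ be defined by the formula in the statement. The heart of the proof is to verify that $h_1(\beta) = 0$, regarded as an identity in $\Q[\alpha, a, b, z, w]$ modulo the ideal generated by $g(\alpha)$ and $w^2 - z^3 - a z - b$.

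To do this, I would compute $\beta^2$, $\beta^3$, and $\beta^4$ in succession, expressing each as a $\Q(a, b, z, w)$-linear combination of $1, \alpha, \alpha^2, \alpha^3$. Multiplying by $\beta$ raises the $\alpha$-degree by up to $3$ each time, so the reduction $\alpha^4 \equiv 4\Delta\alpha + 12 a \Delta$ must be applied repeatedly; the scalar coefficients that accumulate are simplified using $w^2 = z^3 + a z + b$. Substituting these expansions into
\[
h_1(\beta) = \beta^4 - 512\,d\,\beta^2 + 2^{15}\,d\,w^2\,\beta + 2^{16}\,d\bigl(d + w^2(12 a z - 36 b)\bigr) + 2^{18}\,d\bigl(27 b z^3 - 9 a^2 z^2 - a^3\bigr),
\]
one then checks that the coefficient of each of $1, \alpha, \alpha^2, \alpha^3$ vanishes modulo the curve relation. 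In practice this is a computer algebra verification, analogous to the reductions already used to simplify the $y^4$, $y^3$, $y^2$, and constant coefficients of $f_{E, Q_8, P}$ earlier in the paper.

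Once $h_1(\beta) = 0$ is established, the rest of the proof is formal. By the preceding theorem on the three $S_4$-quotients of $\mathrm{Hol}(Q_8)$, the polynomial $h_1$ is irreducible over $\Q$ of degree $4$; similarly $g$ is irreducible of degree $4$, being the defining polynomial of the $S_4$-quotient of $\Q(E[4])/\Q$ when $\overline{\rho}_{E, 4}$ is surjective. Since $\beta \in k_g$ satisfies the irreducible polynomial $h_1$, we have $[\Q(\beta) : \Q] = 4 = [k_g : \Q]$, which forces $\Q(\beta) = k_g$ and produces a canonical $\Q$-algebra isomorphism $k_g \cong \Q[y]/(h_1(y)) = k_1$ sending $\beta$ to the class of $y$. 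The inverse map recovers $\alpha$ from $\beta$ by inverting the cubic expression, yielding the desired $P$-dependent identification.

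The main obstacle is the identity verification $h_1(\beta) = 0$. Because $\beta$ is a cubic in $\alpha$ with coefficients polynomial in $a, b, z$, the expansion of $\beta^4$ produces intermediate polynomials of large total degree in $a, b, z, w$ before reduction, and the essential cancellations require using $g(\alpha) = 0$ and $w^2 = z^3 + a z + b$ simultaneously; dropping either relation leaves a nonzero residual. The specific form of the coefficients in $\beta$ — namely $-1/(9 b)$, $-4 a$, $8 a^2 - 72 b z$, and $48 d$ — is not transparent from first principles and reflects a nontrivial matching between the two quartic resolvents arising from the $S_4$-quotient of $\Q(E[4])/\Q$ on one side and from the $\mathrm{Hol}(Q_8)$-extension on the other, which is the substantive content of the theorem.
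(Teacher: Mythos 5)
Your proposal matches the paper's proof in its essential structure: verify $h_1(\beta) = 0$ by expanding $\beta^4, \beta^2, \beta$ as $\Q(a,b,z,w)$-linear combinations of $1, \alpha, \alpha^2, \alpha^3$ (reducing with $\alpha^4 = 4\Delta\alpha + 12a\Delta$), then observe that the resulting coefficients vanish modulo $w^2 - z^3 - az - b$, exactly as the paper does with $3^8 b^4 h_1(\beta)$ in Appendix B. Your closing degree argument spelling out why $\Q(\beta) = k_g$ is a welcome elaboration of the paper's terser ``the map is invertible, so the map is an isomorphism,'' but it is the same route, not a different one.
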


\begin{proof} Consider $3^8b^4h_1(\beta)$. It is a degree 12 polynomial in $\alpha$ with coefficients in $\Z[a,b,z,w]$. See \hyperref[appendixB]{Appendix B} for the polynomial. After, reducing this polynomial using that $\alpha^4=4\Delta \alpha+12 a \Delta$, we get that $3^8b^4h_1(\beta)=c_3 \alpha^3+c_2\alpha^2+c_1 \alpha+c_0$ with $c_i \in \Z[a,b,z,w]$. See \hyperref[appendixB]{Appendix B} for these coefficients. After further using that $P=(z,w)$ is a point on $E$, so $z^3+az+b-w^2=0$, each of these coefficients reduces to zero and so this is the zero polynomial. Therefore, $\beta$ satisfies the polynomial $3^8b^4 h_1(x)$ and the map is invertible, so the map is an isomorphism.
\end{proof}

Specializing to the case of the elliptic curve `83a1' with point $P=(15,-108)$, we get that 
\begin{align*}
h_1&=x^4 - 2^{17}3^{12}83^1x^2 + 2^{27}3^{18}83^1x - 2^{32}3^{24}7^283^1\\
h_2&=x^4 - 2^{17}3^{12}83^1x^2 +  2^{27}3^{18}83^1x + 2^{30}3^{25}83^1181^1\\
h_3&=x^4 + 2^53^3x^3 - 2^43^7x^2 - 2^83^{12}83^1\\
g &=x^4 + 2^{14}3^{12}83^1x + 2^{14}3^{16}47^183^1\\ 
\beta&=1/(2^13^5199^1)(\alpha^3 - 2^23^347^1\alpha^2 + 2^63^889^1\alpha + 2^{12}3^{13}83^1)
\end{align*}

\newpage

\begin{center} \textbf{Appendix A} \end{center}
\label{sec: appendixA}

For $f(x)=r(x^2)$ of degree 8 and discriminant $D$, \cite{Mattman} gives a degree 12 resolvent polynomial $h(x)$ irreducible over $\Q$ such that whether the reducibility/irreducibility of $h(x)$ over $\Q(\sqrt{D})$ can be used to distinguish between certain possible Galois groups of $f(x)$ over $\Q$. In this appendix, we give this resolvent polynomial, in general and specialized to our context.

Suppose that $f(x)=r(x^2)$, where $r(x)=x^4+c_3x^3+c_2x^2+c_1x+c_0$, in general and we will then specialize to the case that $r(x)=x^4-8wx^3+6(2az+3b)x^2-(4a^3+27b^2)$ where $E: y^2=x^3+ax+b$ and $P=(z,w) \in E(\Q)$.

$h(x)=x^{12}+d_{11}x^{11}+d_{10}x^{10}+d_9x^9+d_8x^8+d_7x^7+d_6x^6+d_5x^5+d_4x^4+d_3x^3+d_2x^2+d_1x+d_0= x^{12} - 6c_2x^{11} + (-10c_0 + 4c_1c_3 + 15c_2^2)x^{10} + (50c_0c_2 - 20c_1c_2c_3 - 20c_2^3)x^9 + (55c_0^2 - 
    34c_0c_1c_3 - 106c_0c_2^2 + 2c_0c_2c_3^2 + 2c_1^2c_2 + 6c_1^2c_3^2 + 40c_1c_2^2c_3 + 15c_2^4)x^8 + 
    (-190c_0^2c_2 + 122c_0c_1c_2c_3 + 118c_0c_2^3 - 6c_0c_2^2c_3^2 - 6c_1^2c_2^2 - 22c_1^2c_2c_3^2 - 
    40c_1c_2^3c_3 - 6c_2^5 + v_1 + v_2)x^7 + (-150c_0^3 + 130c_0^2c_1c_3 + 270c_0^2c_2^2 - 10c_0^2c_2c_3^2 - 
    10c_0c_1^2c_2 - 38c_0c_1^2c_3^2 - 174c_0c_1c_2^2c_3 + 4c_0c_1c_2c_3^3 - 68c_0c_2^4 + 6c_0c_2^3c_3^2 + 
    4c_1^3c_2c_3 + 4c_1^3c_3^3 + 6c_1^2c_2^3 + 30c_1^2c_2^2c_3^2 + 20c_1c_2^4c_3 + c_2^6 - 3c_2v_1 - 3c_2v_2 + 
    v_3 + v_4)x^6 + (300c_0^3c_2 - 260c_0^2c_1c_2c_3 - 190c_0^2c_2^3 + 20c_0^2c_2^2c_3^2 + 20c_0c_1^2c_2^2 + 
    76c_0c_1^2c_2c_3^2 + 118c_0c_1c_2^3c_3 - 8c_0c_1c_2^2c_3^3 + 16c_0c_2^5 - 2c_0c_2^4c_3^2 - 5c_0v_1 - 
    5c_0v_2 - 8c_1^3c_2^2c_3 - 8c_1^3c_2c_3^3 - 2c_1^2c_2^4 - 18c_1^2c_2^3c_3^2 - 4c_1c_2^5c_3 + 2c_1c_3v_1 + 
    2c_1c_3v_2 + 3c_2^2v_1 + 3c_2^2v_2 - 3c_2v_3 - 3c_2v_4)x^5 + (225c_0^4 - 210c_0^3c_1c_3 - 240c_0^3c_2^2 +
    30c_0^3c_2c_3^2 + 30c_0^2c_1^2c_2 + 79c_0^2c_1^2c_3^2 + 172c_0^2c_1c_2^2c_3 - 14c_0^2c_1c_2c_3^3 + 
    64c_0^2c_2^4 - 16c_0^2c_2^3c_3^2 + c_0^2c_2^2c_3^4 - 14c_0c_1^3c_2c_3 - 14c_0c_1^3c_3^3 - 16c_0c_1^2c_2^3 -
    42c_0c_1^2c_2^2c_3^2 + 2c_0c_1^2c_2c_3^4 - 32c_0c_1c_2^4c_3 + 4c_0c_1c_2^3c_3^3 + 10c_0c_2v_1 + 
    10c_0c_2v_2 - 5c_0v_3 - 5c_0v_4 + c_1^4c_2^2 + 2c_1^4c_2c_3^2 + c_1^4c_3^4 + 4c_1^3c_2^3c_3 + 
    4c_1^3c_2^2c_3^3 + 4c_1^2c_2^4c_3^2 - 4c_1c_2c_3v_1 - 4c_1c_2c_3v_2 + 2c_1c_3v_3 + 2c_1c_3v_4 - c_2^3v_1 - 
    c_2^3v_2 + 3c_2^2v_3 + 3c_2^2v_4)x^4 + (15c_0^2v_1 + 15c_0^2v_2 - 7c_0c_1c_3v_1 - 7c_0c_1c_3v_2 - 
    8c_0c_2^2v_1 - 8c_0c_2^2v_2 + c_0c_2c_3^2v_1 + c_0c_2c_3^2v_2 + 10c_0c_2v_3 + 10c_0c_2v_4 + c_1^2c_2v_1 + 
    c_1^2c_2v_2 + c_1^2c_3^2v_1 + c_1^2c_3^2v_2 + 2c_1c_2^2c_3v_1 + 2c_1c_2^2c_3v_2 - 4c_1c_2c_3v_3 - 
    4c_1c_2c_3v_4 - c_2^3v_3 - c_2^3v_4)x^3 + (15c_0^2v_3 + 15c_0^2v_4 - 7c_0c_1c_3v_3 - 7c_0c_1c_3v_4 - 
    8c_0c_2^2v_3 - 8c_0c_2^2v_4 + c_0c_2c_3^2v_3 + c_0c_2c_3^2v_4 + c_1^2c_2v_3 + c_1^2c_2v_4 + c_1^2c_3^2v_3 + 
    c_1^2c_3^2v_4 + 2c_1c_2^2c_3v_3 + 2c_1c_2^2c_3v_4 + v_1v_2)x^2 + (v_1v_4 + v_2v_3)x + v_3v_4$, where $v_1, v_2, v_3, v_4 \in \overline{\mathbb{Q}}$ are defined such that $p_1=x^2-\mathrm{sum}_1x+\mathrm{product}_1=(x-v_1)(x-v_2)$ and $p_2=x^2-\mathrm{sum}_2x+\mathrm{product}_2=(x-v_3)(x-v_4)$.
    
 Let $e_1=r_1+r_2+r_3+r_4$, $e_2=r_1r_2+r_1r_3+r_1r_4+r_2r_3+r_2r_4+r_3r_4$, $e_3=r_1r_2r_3+r_1+r_2r_4+r_1r_3r_4+r_2r_3r_4$, and $e_4=r_1r_2r_3r_4$ be the elementary symmetric polynomials in the roots of $r(x)$.
    
  Then, $\mathrm{product}_2=v_3v_4=e_1^8e_4^4 - 2e_1^7e_2e_3e_4^3 + e_1^6e_2^2e_3^2e_4^2 - 7e_1^6e_2e_4^4 + 4e_1^6e_3^2e_4^3 + 
    14e_1^5e_2^2e_3e_4^3 - 6e_1^5e_2e_3^3e_4^2 + 2e_1^5e_3e_4^4 - 7e_1^4e_2^3e_3^2e_4^2 + 
    2e_1^4e_2^2e_3^4e_4 + 17e_1^4e_2^2e_4^4 - 23e_1^4e_2e_3^2e_4^3 + 6e_1^4e_3^4e_4^2 - 10e_1^4e_4^5 - 
    27e_1^3e_2^3e_3e_4^3 + 29e_1^3e_2^2e_3^3e_4^2 - 6e_1^3e_2e_3^5e_4 + 3e_1^3e_2e_3e_4^4 + 
    4e_1^3e_3^3e_4^3 + e_1^2e_2^5e_4^3 + 12e_1^2e_2^4e_3^2e_4^2 - 7e_1^2e_2^3e_3^4e_4 - 29e_1^2e_2^3e_4^4 + 
    e_1^2e_2^2e_3^6 + 29e_1^2e_2^2e_3^2e_4^3 - 23e_1^2e_2e_3^4e_4^2 + 35e_1^2e_2e_4^5 + 4e_1^2e_3^6e_4 - 
    19e_1^2e_3^2e_4^4 + 13e_1e_2^4e_3e_4^3 - 27e_1e_2^3e_3^3e_4^2 + 14e_1e_2^2e_3^5e_4 + 
    11e_1e_2^2e_3e_4^4 - 2e_1e_2e_3^7 + 3e_1e_2e_3^3e_4^3 + 2e_1e_3^5e_4^2 - 10e_1e_3e_4^5 - 4e_2^6e_4^3 +
    e_2^5e_3^2e_4^2 + 33e_2^4e_4^4 - 29e_2^3e_3^2e_4^3 + 17e_2^2e_3^4e_4^2 - 54e_2^2e_4^5 - 7e_2e_3^6e_4 + 
    35e_2e_3^2e_4^4 + e_3^8 - 10e_3^4e_4^3 + 25e_4^6$.
    
    Also, $\mathrm{sum}_1=v_1+v_2=-2e_1^2e_2^2e_4 - 2e_1^2e_2e_3^2 + 14e_1e_2e_3e_4 + 6e_2^3e_4 - 2e_2^2e_3^2 - 30e_2e_4^2$.
    Also, $\mathrm{sum}_2=v_3+v_4=c_0=-2e_1^4e_4^2 + 2e_1^3e_2e_3e_4 + 7e_1^2e_2e_4^2 - 4e_1^2e_3^2e_4 - 7e_1e_2^2e_3e_4 + 2e_1e_2e_3^3 - 2e_1e_3e_4^2 + 2e_2^2e_4^2 + 7e_2e_3^2e_4 - 2e_3^4 + 10e_4^3$.
    
 Let $g_2=-2e_1^6e_2e_4^3 - 2e_1^6e_3^2e_4^2 - 2e_1^5e_2^2e_3e_4^2 + 2e_1^5e_2e_3^3e_4 + 14e_1^5e_3e_4^3 + 
    e_1^4e_2^4e_4^2 + 6e_1^4e_2^3e_3^2e_4 + e_1^4e_2^2e_3^4 + 23e_1^4e_2^2e_4^3 - 13e_1^4e_2e_3^2e_4^2 - 
    4e_1^4e_3^4e_4 - 3e_1^4e_4^4 - 23e_1^3e_2^3e_3e_4^2 - 25e_1^3e_2^2e_3^3e_4 + 2e_1^3e_2e_3^5 - 
    39e_1^3e_2e_3e_4^3 + 30e_1^3e_3^3e_4^2 - 6e_1^2e_2^5e_4^2 - 18e_1^2e_2^4e_3^2e_4 + 6e_1^2e_2^3e_3^4 - 20e_1^2e_2^3e_4^3 + 171e_1^2e_2^2e_3^2e_4^2 - 13e_1^2e_2e_3^4e_4 - 29e_1^2e_2e_4^4 - 2e_1^2e_3^6 - 
    30e_1^2e_3^2e_4^3 + 102e_1e_2^4e_3e_4^2 - 23e_1e_2^3e_3^3e_4 - 2e_1e_2^2e_3^5 - 213e_1e_2^2e_3e_4^3 - 39e_1e_2e_3^3e_4^2 + 14e_1e_3^5e_4 + 92e_1e_3e_4^4 + 9e_2^6e_4^2 - 6e_2^5e_3^2e_4 + e_2^4e_3^4 - 
    122e_2^4e_4^3 - 20e_2^3e_3^2e_4^2 + 23e_2^2e_3^4e_4 + 303e_2^2e_4^4 - 2e_2e_3^6 - 29e_2e_3^2e_4^3 - 
    3e_3^4e_4^2 - 106e_4^5$.
    
Then, $\mathrm{product}_1=v_1v_2=g_2-(v_3 + v_4)(15c_0^2 - 7c_0c_1c_3 - 8c_0c_2^2 + c_0c_2c_3^2 + c_1^2c_2 + c_1^2c_3^2 + 2c_1c_2^2c_3)$. 
    
Now, specializing to the case $f_P(x)=f(x)=r(x^2)=x^8-8wx^6+6(2az+3b)x^4-(4a^3+27b^2)$, this polynomial evaluates as 

  $h(x)=x^{12} + (-2^33^2az - 2^23^3b)x^{11} + (2^35^1a^3 + 2^43^35^1a^2z^2 + 2^43^45^1abz + 2^13^35^119^1b^2)x^{10} + (-2^53^15^2a^4z - 
    2^43^25^2a^3b - 2^83^35^1a^3z^3 - 2^73^55^1a^2bz^2 - 2^33^45^17^111^1ab^2z - 2^23^55^129^1b^3)x^9 + (2^45^111^1a^6 + 
    2^73^253^1a^5z^2 + 2^73^353^1a^4bz + 2^83^55^1a^4z^4 - 2^{11}3^1a^4zw^2 + 2^33^3691^1a^3b^2 + 2^93^65^1a^3bz^3 - 
    2^{10}3^2a^3bw^2 +2^53^5593^1a^2b^2z^2 + 2^53^6233^1ab^3z - 2^93^4ab^2zw^2 + 3^611^1317^1b^4 - 
    2^83^5b^3w^2)x^8 + (-2^73^15^119^1a^7z - 2^63^25^119^1a^6b - 2^93^359^1a^6z^3 - 2^83^559^1a^5bz^2 - 2^{11}3^6a^5z^5 + 
    2^{13}3^3a^5z^2w^2 - 2^63^413^189^1a^4b^2z - 2^{10}3^75^1a^4bz^4 + 2^{13}3^4a^4bzw^2 - 2^53^5449^1a^3b^3 - 
    2^73^6419^1a^3b^2z^3 + 2^{11}3^5a^3b^2w^2 - 2^63^8179^1a^2b^3z^2 + 2^{11}3^6a^2b^2z^2w^2 - 
    2^33^729^1151^1ab^4z + 2^{11}3^7ab^3zw^2 - 2^23^85^113^119^1b^5 + 2^93^8b^4w^2 + v_1 + v_2)x^7 + (2^73^15^2a^9 + 
    2^93^55^1a^8z^2 + 2^93^65^1a^7bz + 2^{12}3^417^1a^7z^4 - 2^{13}3^15^1a^7zw^2 + 2^53^55^141^1a^6b^2 + 
    2^{13}3^517^1a^6bz^3 - 2^{12}3^25^1a^6bw^2 + 2^{12}3^6a^6z^6 - 2^{15}3^4a^6z^3w^2 + 2^83^7151^1a^5b^2z^2 + 
    2^{12}3^8a^5bz^5 - 2^{14}3^6a^5bz^2w^2 + 2^83^7181^1a^4b^3z + 2^{11}3^731^1a^4b^2z^4 - 
    2^{12}3^459^1a^4b^2zw^2 + 2^33^8929^1a^3b^4 +2^{16}3^8a^3b^3z^3 - 2^{11}3^523^1a^3b^3w^2 - 
    2^{13}3^7a^3b^2z^3w^2 + 2^53^{10}11^137^1a^2b^4z^2 - 2^{12}3^9a^2b^3z^2w^2 + 2^53^{10}389^1ab^5z - 
    2^93^7113^1ab^4zw^2 - 2^23^2az(v_1+v_2)+ 2^13^{10}5^17^171^1b^6 - 2^83^841^1b^5w^2 - 2^13^3b(v_1+v_2) + v_3 
    + v_4)x^6 + (-2^{10}3^25^2a^{10}z - 2^93^35^2a^9b - 2^{11}3^35^119^1a^9z^3 - 2^{10}3^55^119^1a^8bz^2 - 2^{16}3^5a^8z^5 + 
    2^{16}3^25^1a^8z^2w^2 - 2^83^65^143^1a^7b^2z - 2^{15}3^65^1a^7bz^4 + 2^{16}3^35^1a^7bzw^2 + 
    2^{17}3^4a^7z^4w^2 - 2^73^65^153^1a^6b^3 - 2^{10}3^65^223^1a^6b^2z^3 + 2^{14}3^45^1a^6b^2w^2 + 
    2^{18}3^5a^6bz^3w^2 - 2^93^95^117^1a^5b^3z^2 - 2^{14}3^8a^5b^2z^5 + 2^{15}3^523^1a^5b^2z^2w^2 - 
    2^63^95^229^1a^4b^4z - 2^{13}3^95^1a^4b^3z^4 + 2^{15}3^611^1a^4b^3zw^2 + 2^{15}3^7a^4b^2z^4w^2 - 
    2^53^9647^1a^3b^5 - 2^73^95^1211^1a^3b^4z^3 + 2^{16}3^7a^3b^4w^2 + 2^{16}3^8a^3b^3z^3w^2 + 2^25^1a^3(v_1+v_2)
    - 2^63^{11}5^183^1a^2b^5z^2 + 2^{12}3^841^1a^2b^4z^2w^2 + 2^43^3a^2z^2(v_1 + v_2) - 
    2^43^{11}5^1311^1ab^6z + 2^{12}3^917^1ab^5zw^2 + 2^43^4abz(v_1 +  v_2) - 2^23^2az(v_3 +v_4) - 
    2^33^{12}11^137^1b^7 + 2^{10}3^{10}11^1b^6w^2 + 3^341^1b^2(v_1 + v_2) - 2^13^3b(v_3 +v_4)x^5 + (2^83^25^2a^{12} + 
    2^{14}3^35^1a^{11}z^2 + 2^{14}3^45^1a^{10}bz + 2^{18}3^4a^{10}z^4 - 2^{15}3^25^1a^{10}zw^2 + 2^83^65^17^1a^9b^2 + 
    2^{19}3^54a^9bz^3 - 2^{14}3^35^1a^9bw^2 - 2^{20}3^3a^9z^3w^2 + 2^{12}3^737^1a^8b^2z^2 - 
    2^{19}3^5a^8bz^2w^2 + 2^{20}3^2a^8z^2w^4 + 2^{12}3^747^1a^7b^3z + 2^{17}3^7a^7b^2z^4 - 
    2^{13}3^637^1a^7b^2zw^2 + 2^{20}3^3a^7bzw^4 + 2^53^811^197^1a^6b^4 + 2^{18}3^8a^6b^3z^3 - 
    2^{12}3^647^1a^6b^3w^2 - 2^{19}3^6a^6b^2z^3w^2 + 2^{18}3^4a^6b^2w^4 + 2^{10}3^{11}23^1a^5b^4z^2 - 
    2^{18}3^8a^5b^3z^2w^2 + 2^{19}3^5a^5b^2z^2w^4 + 2^{10}3^{10}79^1a^4b^5z + 2^{14}3^{10}a^4b^4z^4 - 2^{11}3^{10}23^1a^4b^4zw^2 + 2^{19}3^6a^4b^3zw^4 - 2^53^15^1a^4z(v_1 +v_2) + 2^43^{12}7^137^1a^3b^6 + 2^{15}3^{11}a^3b^5z^3 - 2^{10}3^979^1a^3b^5w^2 - 2^{16}3^9a^3b^4z^3w^2 + 2^{17}3^7a^3b^4w^4 -  2^43^25^1a^3b(v_1+v_2) - 2^63^3a^3z^3(v_1+v_2) + 2^25^1a^3(v_3 +v_4)+  2^83^{12}101^1a^2b^6z^2 - 2^{15}3^{11}a^2b^5z^2w^2 + 2^{16}3^8a^2b^4z^2w^4 - 2^53^5a^2bz^2(v_1 +v_2) + 2^43^3a^2z^2(v_3 + v_4) + 2^83^{13}37^1ab^7z - 2^93^{11}101^1ab^6zw^2 +  2^{16}3^9ab^5zw^4 - 2^33^423^1ab^2z(v_1 +v_2) + 2^43^4abz(v_3 +v_4)+ 
    3^{14}37^21b^8 - 2^83^{12}37^1b^7w^2 + 2^{14}3^{10}b^6w^4 - 2^23^511^1b^3(v_1+v_2) + 3^341^1b^2(v_3 + v_4))x^4 + (2^43^15^1a^6(v_1+v_2) + 2^93^2a^5z^2(v_1 + v_2) + 2^93^3a^4bz(v_1 + v_2) - 2^{10}3^1a^4zw^2(v_1+v_2)  - 2^53^15^1a^4z(v_3 +v_4) + 2^33^57^1a^3b^2(v_1 +v_2)  - 2^93^2a^3bw^2(v_1+v_2)  - 2^43^25^1a^3b(v_3+v_4) - 2^63^3a^3z^3(v_3+v_4)
    + 2^73^5a^2b^2z^2(v_1 +v_2) - 2^53^5a^2bz^2(v_3+v_4)  + 
    2^73^6ab^3z(v_1 + v_2) - 2^83^4ab^2zw^2(v_1+v_2) - 2^33^423^1ab^2z(v_3 +v_4) 
    + 3^737^1b^4(v_1 + v_2) - 2^73^5b^3w^2(v_1 +v_2) - 2^23^511^1b^3(v_3 + v_4))x^3 + (2^43^15^1a^6(v_3 + v_4) + 2^93^2a^5z^2(v_3 + v_4) + 2^93^3a^4bz(v_3 + v_4) - 2^{10}3^1a^4zw^2(v_3+v_4) + 2^33^57^1a^3b^2(v_3 + v_4) - 
    2^93^2a^3bw^2(v_3 +v_4) + 2^73^5a^2b^2z^2(v_3 + v_4) + 2^73^6ab^3z(v_3 + v_4) - 2^83^4ab^2zw^2(v_3+v_4) + 3^737^1b^4(v_3 + v_4) - 
    2^73^5b^3w^2(v_3 + v_4 ) + v_1v_2)x^2 + (v_1v_4 + v_2v_3)x + v_3v_4$, where
    
    $v_1$ and $v_2$ are the roots of $p_1=x^2 + (2^73^25^1a^7z + 2^63^35^1a^6b + 2^93^4a^6z^3 + 2^83^6a^5bz^2 - 2^{13}3^2a^5z^2w^2 + 2^63^523^1a^4b^2z - 2^{13}3^3a^4bzw^2 +2^53^611^1a^3b^3 + 2^73^7a^3b^2z^3 - 
    2^{11}3^4a^3b^2w^2 + 2^63^9a^2b^3z^2 - 2^{11}3^5a^2b^2z^2w^2 + 2^33^841^1ab^4z - 2^{11}3^6ab^3zw^2 + 2^23^917^1b^5 - 2^93^7b^4w^2)x + 2^{18}a^15 + 
    2^{12}3^2353^1a^14z^2 + 2^{12}3^3353^1a^13bz + 2^{15}3^453^1a^13z^4 - 2^{20}3^3a^13zw^2 + 2^{10}3^37^1197^1a^{12}b^2 + 2^{16}3^553^1a^{12}bz^3 - 2^{19}3^4a^{12}bw^2 + 2^{16}3^8a^{12}z^6 
    - 2^{19}3^317^1a^{12}z^3w^2 +2^{20}3^3a^{12}w^4 + 2^{12}3^57^117^119^1a^{11}b^2z^2 + 2^{16}3^{10}a^{11}bz^5 - 2^{18}3^517^1a^{11}bz^2w^2 - 2^{21}3^6a^{11}z^5w^2 + 
    2^{12}3^623^143^1a^{10}b^3z + 2^{13}3^817^119^1a^{10}b^2z^4 - 2^{17}3^65^2a^{10}b^2zw^2 - 2^{20}3^75^1a^{10}bz^4w^2 + 2^{24}3^4a^{10}z^4w^4 + 2^{10}3^641^153^1a^9b^4 + 
    2^{14}3^911^113^1a^9b^3z^3 - 2^{16}3^641^1a^9b^3w^2 + 2^{15}3^{11}a^9b^2z^6 - 2^{17}3^7137^1a^9b^2z^3w^2 + 2^{20}3^6a^9b^2w^4 + 2^{25}3^5a^9bz^3w^4 + 
    2^93^931^1239^1a^8b^4z^2 + 2^{15}3^{13}a^8b^3z^5 - 2^{16}3^{10}19^1a^8b^3z^2w^2 - 2^{20}3^9a^8b^2z^5w^2 + 2^{23}3^7a^8b^2z^2w^4 + 2^93^{10}2273^1a^7b^5z + 
    2^{11}3^{11}593^1a^7b^4z^4 - 2^{15}3^{10}41^1a^7b^4zw^2 - 2^{19}3^{10}5^1a^7b^3z^4w^2 + 2^{23}3^7a^7b^3zw^4 + 2^{23}3^7a^7b^2z^4w^4 + 2^73^95^17^111^129^1a^6b^6 + 
    2^{12}3^{12}233^1a^6b^5z^3 - 2^{14}3^{10}41^1a^6b^5w^2 + 2^{12}3^{14}a^6b^4z^6 - 2^{15}3^{10}257^1a^6b^4z^3w^2 + 2^{17}3^817^1a^6b^4w^4 + 2^{24}3^8a^6b^3z^3w^4 + 
    2^83^{11}15797^1a^5b^6z^2 + 2^{12}3^{16}a^5b^5z^5 - 2^{14}3^{12}97^1a^5b^5z^2w^2 - 2^{17}3^{12}a^5b^4z^5w^2 + 2^{22}3^{10}a^5b^4z^2w^4 + 
    2^83^{12}5^129^2a^4b^7z + 2^93^{13}863^1a^4b^6z^4 - 2^{13}3^{12}179^1a^4b^6zw^2 - 2^{16}3^{13}5^1a^4b^5z^4w^2 + 2^{22}3^{10}a^4b^5zw^4 + 
    2^{20}3^{10}a^4b^4z^4w^4 + 2^63^{12}5^129^141^1a^3b^8 + 2^{10}3^{14}17^119^1a^3b^7z^3 - 2^{12}3^{13}7^2a^3b^7w^2 - 2^{13}3^{12}13^129^1a^3b^6z^3w^2 + 2^{16}3^{11}11^1a^3b^6w^4 + 
    2^{21}3^{11}a^3b^5z^3w^4 + 2^43^{14}5^21097^1a^2b^8z^2 - 2^{12}3^{14}137^1a^2b^7z^2w^2 + 2^{19}3^{13}a^2b^6z^2w^4 + 2^43^{15}5^123^159^1ab^9z - 
    2^{11}3^{15}79^1ab^8zw^2 + 2^{19}3^{13}ab^7zw^4 + 2^23^{15}7^113^197^1b^{10} - 2^{10}3^{15}59^1b^9w^2 + 2^{12}3^{14}19^1b^8w^4$
   
   and $v_3$ and $v_4$ are the roots of $p_2=x^2 + (2^75^1a^9 - 2^93^2a^8z^2 - 2^93^3a^7bz  - 2^{12}3^17^1a^7zw^2 + 2^53^4a^6b^2 - 2^{11}3^27^1a^6bw^2 + 2^{17}a^6w^4 -2^83^5a^5b^2z^2 - 2^83^6a^4b^3z - 
    2^{11}3^47^1a^4b^2zw^2 - 2^33^8a^3b^4 - 2^{10}3^57^1a^3b^3w^2 + 2^{16}3^3a^3b^2w^4 - 2^53^8a^2b^4z^2 - 2^53^9ab^5z - 2^83^77^1ab^4zw^2 - 2^13^97^1b^6 - 
   2^73^87^1b^5w^2 + 2^{13}3^6b^4w^4)x + 2^{12}5^2a^18 +2^{15}3^5a^17z^2 + 2^{15}3^6a^16bz + 2^{16}3^511^1a^16z^4 - 2^{18}3^15^17^1a^16zw^2 + 2^{11}3^47^119^1a^15b^2 + 
    2^{17}3^611^1a^15bz^3 - 2^{17}3^25^17^1a^15bw^2 +2^{20}3^6a^15z^6 - 2^{20}3^329^1a^15z^3w^2 + 2^{23}5^1a^15w^4 + 2^{13}3^87^2a^14b^2z^2 + 2^{20}3^8a^14bz^5 - 
    2^{19}3^529^1a^14bz^2w^2 - 2^{22}3^5a^14z^5w^2 + 2^{24}3^217^1a^14z^2w^4 + 2^{13}3^859^1a^13b^3z + 2^{16}3^871^1a^13b^2z^4 - 2^{16}3^423^153^1a^13b^2zw^2 - 
    2^{21}3^65^1a^13bz^4w^2 + 2^{24}3^317^1a^13bzw^4 - 2^{28}3^17^1a^13zw^6 + 2^83^72789^1a^{12}b^4 + 2^{17}3^931^1a^{12}b^3z^3 - 2^{15}3^5523^1a^{12}b^3w^2 + 
    2^{18}3^{10}a^{12}b^2z^6 - 2^{20}3^659^1a^{12}b^2z^3w^2 + 2^{21}3^3127^1a^{12}b^2w^4 - 2^{27}3^27^1a^{12}bw^6 + 2^{32}a^{12}w^8 + 2^{12}3^{11}173^1a^{11}b^4z^2 + 
    2^{18}3^{12}a^{11}b^3z^5 - 2^{19}3^913^1a^{11}b^3z^2w^2 - 2^{20}3^9a^{11}b^2z^5w^2 +2^{24}3^517^1a^{11}b^2z^2w^4 + 2^{12}3^{11}151^1a^{10}b^5z + 
    2^{13}3^{12}131^1a^{10}b^4z^4 - 2^{15}3^743^161^1a^{10}b^4zw^2 - 2^{19}3^{10}5^1a^{10}b^3z^4w^2 + 2^{24}3^617^1a^{10}b^3zw^4 - 2^{28}3^47^1a^{10}b^2zw^6 +  2^83^95^27^147^1a^9b^6 + 2^{14}3^{14}17^1a^9b^5z^3 - 2^{14}3^823^141^1a^9b^5w^2 + 2^{16}3^{13}a^9b^4z^6 - 2^{17}3^{10}89^1a^9b^4z^3w^2 + 2^{20}3^6229^1a^9b^4w^4 - 
    2^{27}3^57^1a^9b^3w^6 + 2^{32}3^3a^9b^2w^8 + 2^{10}3^{14}13^129^1a^8b^6z^2 + 2^{16}3^{15}a^8b^5z^5 - 2^{16}3^{12}7^2a^8b^5z^2w^2 - 
    2^{18}3^{12} a^8b^4z^5w^2 + 2^{21}3^917^1a^8b^4z^2w^4 + 2^{10}3^{15}97^1a^7b^7z + 2^{12}3^{14}191^1a^7b^6z^4 - 2^{13}3^{10}41^1107^1a^7b^6zw^2 -    2^{17}3^{13}5^1a^7b^5z^4w^2 + 2^{21}3^{10}17^1a^7b^5zw^4 - 2^{25}3^87^1a^7b^4zw^6 + 2^43^{13}18701^1a^6b^8 + 2^{13}3^{15}71^1a^6b^7z^3 -  2^{12}3^{11}5^17^141^1a^6b^7w^2 +2^{14}3^{15}a^6b^6z^6 - 2^{16}3^{12}7^117^1a^6b^6z^3w^2 + 2^{18}3^9331^1a^6b^6w^4 - 2^{24}3^97^1a^6b^5w^6 + 
    2^{29}3^7a^6b^4w^8 + 2^73^{17}661^1a^5b^8z^2 + 2^{14}3^{17}a^5b^7z^5 - 2^{15}3^{14}59^1a^5b^7z^2w^2 - 2^{16}3^{14}a^5b^6z^5w^2 + 
   2^{20}3^{11}17^1a^5b^6z^2w^4 +2^73^{17}479^1a^4b^9z + 2^83^{17}251^1a^4b^8z^4 - 2^{10}3^{13}17^1383^1a^4b^8zw^2 - 2^{15}3^{15}5^1a^4b^7z^4w^2 + 
    2^{20}3^{12}17^1a^4b^7zw^4 - 2^{24}3^{10}7^1a^4b^6zw^6 + 2^33^{16}7^113^179^1a^3b^{10} + 2^93^{18}7^113^1a^3b^9z^3 - 2^93^{14}1999^1a^3b^9w^2 - 
   2^{12}3^{15}149^1a^3b^8z^3w^2 +2^{15}3^{12}433^1a^3b^8w^4 - 2^{23}3^{11}7^1a^3b^7w^6 + 2^{28}3^9a^3b^6w^8 + 2^53^{20}5^141^1a^2b^{10}z^2 - 
    2^{11}3^{18}23^1a^2b^9z^2w^2 + 2^{16}3^{14}17^1a^2b^8z^2w^4 + 2^53^{20}11^113^1ab^{11}z - 2^83^{16}7^1257^1ab^{10}zw^2 + 2^{16}3^{15}17^1ab^9zw^4 - 
    2^{20}3^{13}7^1ab^8zw^6 + 3^{18}13^273^1b^{12} - 2^73^{17}17^131^1b^{11}w^2 +2^{13}3^{15}107^1b^{10}w^4 - 2^{19}3^{14}7^1b^9w^6 + 2^{24}3^{12}b^8w^8$.

\begin{center} \textbf{Appendix B} \end{center}
\label{sec: appendixB}

The map $k_g \rightarrow k_1$ given by $\alpha \mapsto \beta = \frac{-1}{3^2b}(\alpha^3-4a\alpha^2+(8a^2-72bz)\alpha +2^4 3 d)$ is an isomorphism, where $\alpha$ is a root of $g$. In this appendix, we show that $\beta$ satisfies the defining polynomial $h_1$ of $k_1$.

\begin{equation*}
\begin{split}
3^8b^4h_1(\beta) &= \alpha^{12} - 2^4a\alpha^{11} + (2^55a^2 - 2^53^2bz)\alpha^{10} + (-2^8a^3 + 2^73^3abz + 2^63^4b^2)\alpha^9 \\
& + (-2^817a^4 - 2^{10}3^3a^2bz 
- 2^83^5ab^2 + 2^73^5b^2z^2)\alpha^8 \\
&+ (2^{13}7a^5 -2^{12}3^2a^3bz + 2^{11}3^5a^2b^2 - 2^{10}3^5ab^2z^2 - 
    2^93^7b^3z)\alpha^7  \\
    &+ (-2^{14}5a^6 + 2^{15}3^3a^4bz + 2^{11}3^5a^3b^2 + 2^{11}3^6a^2b^2z^2 + 2^{12}3^7ab^3z + 
    2^{12}3^7b^4 - 2^{11}3^6b^3z^3)\alpha^6 \\
    &+ (-2^{17}5a^7 - 2^{18}3^3a^5bz - 2^{14}3^411a^4b^2 + 2^{15}3^5a^3b^2z^2 
    - 2^{13}3^8a^2b^3z - 2^{15}3^7ab^4 +\\
    &  2^{13}3^6ab^3z^3    + 2^{12}3^9b^4z^2)\alpha^5 + (2^{16}127a^8 - 2^{18}3^25a^6bz
    + 2^{15}3^65a^5b^2 - 2^{15}3^55a^4b^2z^2 -\\
    &  2^{15}3^611a^3b^3z + 2^{16}3^8a^2b^4 - 2^{15}3^6a^2b^3z^3 - 
    2^{14}3^9ab^4z^2 - 2^{16}3^9b^5z + 2^{12}3^8b^4z^4)\alpha^4 \\
    & + (2^{20}3a^9  + 2^{20}3^4a^7bz + 
    2^{21}3^4a^6b^2 + 2^{18}3^6a^5b^2z^2 + 2^{18}3^67a^4b^3z + 2^{16}3^713a^3b^4\\
    &  - 2^{17}3^7a^3b^3z^3 
     - 2^{17}3^6a^3b^3w^2 + 2^{16}3^9a^2b^4z^2 + 2^{18}3^9ab^5z + 2^{15}3^{11}b^6 - 2^{15}3^{10}b^5z^3 \\
    &- 2^{15}3^9b^5w^2)\alpha^3 + (-2^{21}3^3a^{10} - 2^{21}3^5a^8bz - 2^{19}3^431a^7b^2 + 2^{19}3^7a^6b^2z^2\\
    & - 2^{19}3^617a^5b^3z - 2^{18}3^717a^4b^4 + 2^{19}3^6a^4b^3w^2 + 2^{17}3^817a^3b^4z^2 - 
    2^{20}3^9a^2b^5z \\
    &- 2^{17}3^{11}ab^6 + 2^{17}3^9ab^5w^2 + 2^{18}3^{11}b^6z^2)\alpha^2 + 
     (2^{24}3^3a^{11} - 2^{23}3^5a^9bz +\\
     & 2^{25}3^5a^8b^2 - 2^{24}3^7a^6b^3z + 2^{20}3^87a^5b^4 - 2^{21}3^6a^5b^3w^2 - 
    2^{19}3^{10}7a^3b^5z + \\
    & 2^{20}3^8a^3b^4zw^2 + 2^{19}3^{11}a^2b^6 - 2^{19}3^9a^2b^5w^2 - 
    2^{18}3^{13}b^7z + 2^{18}3^{11}b^6zw^2)\alpha \\
    & + 2^{24}3^4a^{12} + 2^{23}3^65a^9b^2 + 2^{22}3^{10}a^6b^4 - 
    2^{23}3^7a^6b^3w^2 -2^{20}3^{10}a^5b^4z^2  \\
    & + 2^{20}3^9a^4b^4zw^2 +2^{18}3^{11}19a^3b^6 + 
    2^{20}3^{11}a^3b^5z^3 - 2^{20}3^{10}5a^3b^5w^2 \\
    &- 2^{18}3^{13}a^2b^6z^2 + 2^{18}3^{12}ab^6zw^2 + 
    2^{18}3^{14}b^8 + 2^{18}3^{14}b^7z^3 - 2^{18}3^{14}b^7w^2
\end{split}
\end{equation*}

    After, reducing using that $\alpha^4=
4\Delta \alpha+12 a \Delta$, this polynomial can be written as $c_3 \alpha^3+c_2\alpha^2+c_1 \alpha+c_0$ where the coefficients are the following:

$c_3= 2^{17}3^6a^4b^3z + 2^{17}3^6a^3b^4 + 2^{17}3^6a^3b^3z^3 - 2^{17}3^6a^3b^3w^2 + 2^{15}3^9ab^5z + 2^{15}3^9b^6 + 2^{15}3^9b^5z^3 - 2^{15}3^9b^5w^2$\\
 
$c_2=-2^{19}3^6a^5b^3z - 2^{19}3^6a^4b^4 - 2^{19}3^6a^4b^3z^3 + 2^{19}3^6a^4b^3w^2 - 2^{17}3^9a^2b^5z - 2^{17}3^9ab^6 - 2^{17}3^9ab^5z^3 + 2^{17}3^9ab^5w^2$\\
 
 $c_1=2^{21}3^6a^6b^3z + 2^{21}3^6a^5b^4 + 2^{21}3^6a^5b^3z^3 - 
    2^{21}3^6a^5b^3w^2 - 2^{20}3^8a^4b^4z^2 + 2^{19}3^8a^3b^5z - 2^{20}3^8a^3b^4z^4 + 
    2^{20}3^8a^3b^4zw^2 + 2^{19}3^9a^2b^6 + 2^{19}3^9a^2b^5z^3 - 2^{19}3^9a^2b^5w^2 - 2^{18}3^{11}ab^6z^2 - 2^{18}3^{11}b^7z - 2^{18}3^{11}b^6z^4 + 2^{18}3^{11}b^6zw^2$\\
    
 $c_0=2^{23}3^7a^7b^3z + 2^{23}3^7a^6b^4 + 2^{23}3^7a^6b^3z^3 - 2^{23}3^7a^6b^3w^2 -  2^{20}3^9a^5b^4z^2 + 2^{21}3^97a^4b^5z - 2^{20}3^9a^4b^4z^4 + 2^{20}3^9a^4b^4zw^2 +   2^{20}3^{10}5a^3b^6 +  2^{20}3^{10}5a^3b^5z^3 - 2^{20}3^{10}5a^3b^5w^2 - 2^{18}3^{12}a^2b^6z^2 + 2^{21}3^{12}ab^7z - 2^{18}3^{12}ab^6z^4 + 2^{18}3^{12}ab^6zw^2 + 2^{18}3^{14}b^8 + 2^{18}3^{14}b^7z^3 - 2^{18}3^{14}b^7w^2$

\newpage 

\bibliography{bib}
\bibliographystyle{plain}

\end{document}